\newtheorem{thm}{Theorem}[section]
\newtheorem{prop}[thm]{Proposition}
\newtheorem{cor}[thm]{Corollary}
\newtheorem{exmp}{Example}[section]
\title{Finite, fiber- and orientation-preserving actions on orientable Seifert manifolds with non-orientable base space}
\author{
  Benjamin Peet \\
  Department of Mathematics\\
  St. Martin's University\\
  Lacey, WA 98503 \\
  \texttt{bpeet@stmartin.edu} \\
}
\begin{document}
\maketitle

\begin{abstract}
This paper extends the results from the author's previous paper to consider finite, fiber- and orientation- preserving group actions on closed, orientable Seifert manifolds $M$ that fiber over a non-orientable base space. An orientable base space double cover $\tilde{M}$ of $M$ is constructed and then an isomorphism between the fiber- and orientation-preserving diffeomorphisms of ${M}$ and the fiber- and orientation-preserving actions on $\tilde{M}$ that preserve the orientation on the fibers and commute with the covering translation is shown. This result and previous results lead to a construction of all actions that satisfy a condition on the obstruction class and the structure of the finite groups that can act on $M$.
\end{abstract}

\textbf{Keywords:} group actions; Seifert manifolds; fiber-preserving; non-orientable surfaces

\textbf{2010 MSC Classification:} 57S25, 55R05

\section{Introduction and preliminary definitions}

\subsection{Introduction}

In the author's previous paper \textit{Finite, fiber- and orientation-preserving group actions on orientable Seifert manifolds with orientable base space} \cite{peet2018}, a construction of a finite, fiber- and orientation-preserving group action on a given Seifert manifold with an orientable base space was shown. This construction is founded upon the way a Seifert manifold is put together as Dehn fillings of $S^{1}\times F$. Here $F$ is a surface with boundary. The construction is - in a general sense - to take a product action on $S^{1}\times F$ and extend across the Dehn fillings. We refer to these actions as \textit{extended product actions}.

The main result in \cite{peet2018} shows that given a finite, orientation and fiber-preserving action on a Seifert manifold with an orientable base space, the action can be constructed as an extended product action - provided it satisfies a condition on the obstruction class of the Seifert manifold.

In this paper we consider extending these results to the non-orientable base space case. This is done by constructing an orientable base space double cover $\tilde{M}$ of $M$  and then showing an isomorphism between the fiber- and orientation-preserving diffeomorphisms of $\tilde{M}$ and the fiber- and orientation-preserving actions on $M$ that preserve the orientation on the fibers and commute with the covering translation.

Specifically, we prove:

\newtheorem*{thm:associativity}{Theorem \ref{thm:associativity}}
\begin{thm:associativity}
 Let $M$ be an orientable Seifert fibered manifold that fibers over an orbifold $B$ that has non-orientable underlying space and $q:\tilde{M}\rightarrow M$ be the orientable base space double cover. Let the covering translation be $\tau:\tilde{M}\rightarrow\tilde{M}$. Then there exists an isomorphism between $Diff_{+}^{fp}(M)$ and $Cent_{+}^{fop}(\tau)$.
\end{thm:associativity}

Finite groups of automorphisms on Seifert manifolds have been considered by J. Kalliongis and A. Miller in the particular case of Lens spaces in \textit{Geometric group actions on lens spaces} \cite{kalliongis2002geometric} and in \textit{Actions on lens spaces which respect a Heegaard decomposition} \cite{kalliongis2003actions}. J. Kalliongis and R. Ohashi considered another specific case of prism manifolds in \textit{Finite group actions on prism manifolds which preserve a Heegaard Klein bottle} \cite{kalliongis2011finite}. Lens spaces double cover prism manifolds and lens spaces have orientable base space whilst prism manifolds can be fibered over a nonorientable base space ($\mathcal{P}^{2}$ with a cone point). This motivates the work of this paper to consider how the actions on the lens spaces relate to the actions on the prism manifold and more generally, how the actions on a manifold with nonorientable base space relate to the actions on its double cover with orientable base space.

Theorem 3.1 allows us to see a construction of actions on a Seifert manifold with a non-orientable base space as a projected extended product action and shows that provided satisfaction of the obstruction condition on the lifted action on $\tilde{M}$ all such actions are constructed this way.

In particular, a fiber- and orientation-preserving action on an orientable Seifert manifold with non-orientable base space can be constructed as a projected action of the extended product action type shown in \cite{peet2018} provided the lifted action satisfies the obstruction condition.

The structure of the groups that can act this way is then considered and presented in Section 5. The specific result is as follows:

\newtheorem*{cor:associativity2}{Corollary \ref{cor:associativity2}}
\begin{cor:associativity2}
 Suppose that $\varphi:G\rightarrow Diff(M)$ is a finite group action on an orientable Seifert manifold with a non-orientable base space. Then provided that the unique lifted group action $\tilde{\varphi}:G\rightarrow Diff(\tilde{M})$ satisfies the obstruction condition, $G$ is isomorphic to a subgroup of $\mathbb{Z}_{2}\times H$ where $H$ is a finite group that acts orientation-preservingly on the orientable base space of $\tilde{M}$.
\end{cor:associativity2}

\subsection{Preliminary definitions}

We now give some preliminary definitions.
Let $M$ be an oriented smooth manifold of dimension $3$ without boundary and $G$ be a finite group. We let $Diff(M)$ be the group of self-diffeomorphisms of $M$, and then define a $G$-action on $M$ to be an injection $\varphi:G\rightarrow Diff(M)$. We use the notation $Diff_{+}(M)$ for the group of orientation-preserving self-diffeomorphisms of $M$. 

$M$ will be assumed to be an orientable Seifert-fibered manifold. We use the original Seifert definition. That is, a Seifert manifold is a $3$-manifold such that $M$ can be decomposed into disjoint fibers where each fiber is a simple closed curve. Then for each fiber $\gamma$, there exists a fibered neighborhood (that is, a subset consisting of fibers and containing $\gamma$) which can be mapped under a fiber-preserving map onto a solid fibered torus. Seifert manifolds were first introduced by H. Seifert in his dissertation \textit{Topologie Dreidimensionaler Gefaserter R{\"a}ume} \cite{Seifert1933}.

We call a Seifert bundle a Seifert manifold $M$ along with a continuous map $p:M\rightarrow B$ where $p$ identifies each fiber to a point. Here $B$ is an $2$-orbifold without boundary or mirror lines - that is some underlying space $B_{U}$ along with some cone points of various degrees referring to the critical fibers. Given an orientable (and oriented) base space it is possible to remove the critical fibers; take a section of the bundle; and then orient the fibers according to the normal vector to the surface.

A $G$-action is said to be fiber-preserving on a Seifert manifold $M$ if for any fiber $\gamma$ and any $g\in G$, $\varphi(g)(\gamma)$ is some fiber of $M$. We use the notation $Diff^{fp}(M)$ for the group of fiber-preserving self-diffeomorphisms of $M$ (given some Seifert fibration). We use the terminology fiber-orientation-preserving, if an action is fiber-preserving and preserves the orientation of the fibers. For this we use the notation $Diff^{fop}(M)$. Note that from above this is only in the case of an orientable (and oriented) base space. Given a fiber-preserving $G$-action, there is an induced action $\varphi_{B_{U}}:G_{B_{U}}\rightarrow Diff(B_{U})$ on the underlying space $B_{U}$ of the base space $B$. This is given by $\varphi_{B_{U}}(g)=p\circ\varphi(g)\circ p^{-1}$ for each $g\in G$ and then $\varphi_{B_{U}}(G)=G_{B_{U}}$. Note that this is well defined as for any given $b\in B$ and distinct $x_{1},x_{2}\in M$, by definition $(p\circ\varphi(g))(x_{1})=(p\circ\varphi(g))(x_{1})$.

Given a finite action $\varphi:G\rightarrow Diff^{fp}(M)$, we define the orbit number of a fiber $\gamma$ under the action to be $\#Orb_{\varphi}(\gamma)=\#\{\alpha|\varphi(g)(\gamma)=\alpha\textrm{ for some }g\in G\}$. 

If we have a manifold $M$, then a product structure on $M$ is a diffeomorphism $k:A\times B\rightarrow M$ for some manifolds $A$ and $B$. For further information we refer the reader to J.M. Lee's \textit{Introduction to Smooth Manifolds} \cite{lee2003smooth}. If a Seifert-fibered manifold $M$ has a product structure $k:S^{1}\times F\rightarrow M$ for some surface $F$ and $k(S^{1}\times\{x\})$ are the fibers of $M$ for each $x\in F$, then we say that $k:S^{1}\times F\rightarrow M$ is a fibering product structure of $M$. 

We say that a $G$-action $\varphi:G\rightarrow Diff(A\times B)$ is a product action if for each $g\in G$, the diffeomorphism $\varphi(g):A\times B\rightarrow A\times B$ can be expressed as $(\varphi_{1}(g),\varphi_{2}(g))$ where $\varphi_{1}(g):A\rightarrow A$ and $\varphi_{2}(g):B\rightarrow B$. Here $\varphi_{1}:G\rightarrow Diff(A)$ and $\varphi_{2}:G\rightarrow Diff(B)$ are not necessarily injections. 

Given an action $\varphi:G\rightarrow Diff(M)$ and a product structure $k:A\times B\rightarrow M$, we say that $\varphi$ leaves the product structure $k:A\times B\rightarrow M$ invariant if $\psi(g)=k^{-1}\circ\varphi(g)\circ k$ defines a product action $\psi:G\rightarrow Diff(A\times B)$.

\section{Seifert manifolds with non-orientable base space}

Orientable Seifert manifolds that fiber over a orientable base space $B$ can be constructed by taking an $S^1$ bundle over an orientable surface with boundary and Dehn filling the torus boundary components. This yields a Seifert manifold that we denote by $$(g,o_{1}|(q_{1},p_{1}),\ldots,(q_{n},p_{n})), q_{i}>0$$.
Here $o_{1}$ denotes the orientability of $F$ and $g$ denotes its' genus. $(q_{i},p_{i})$ denote the fillings.

Orientable Seifert manifolds can also however fiber over a non-orientable base space $B$. An example is of the prism manifolds which can fiber over $\mathbb{P}^{2}(n)$, a projective plane with a cone point. 

We use the notation $$(g,n_{2}|(q_{1},p_{1}),\ldots,(q_{n},p_{n})), q_{i}>0$$
where $n_{2}$ indicates that the underlying surface of the base space is diffeomorphic to a connected sum of $g$ copies of $\mathbb{P}^{2}$. 
The notation here is an accepted adaptation of the original Seifert notation, and note that $n_{2}$ (and $o_{1}$) are simply symbols indicating the orientability of the Seifert manifold and the orientability of the base space.

Following M. Jankins and W. D. Neumann's \textit{Lectures on Seifert manifolds} \cite{jankins1983lectures}, we present a construction of such a manifold.

Let $S$ be a non-orientable surface of genus $g$, then $S=S_{1}\#S_{2}$ where $S_{1}$ is orientable and $S_{2}$ is either $\mathbb{P}^{2}$ or $\mathbb{P}^{2}\#\mathbb{P}^{2}$. This follows from the Classification Theorem for surfaces, see L.C. Kinsey's \textit{Topology of surfaces} \cite{kinsey1997topology}.

So then we can cut $S$ along one or two simple closed curves so that it is decomposed into $S_{1}$ and $S_{2}$ or decomposed into $S_{1}$, $S_{2}$ and $S_{3}$. Here $S_{1}$ is an orientable surface with one or two boundary components and $S_{2}$ and $S_{3}$ are Möbius bands. In the first case see Figure 1, in the second see Figure 2.

\begin{figure}[ht]
\centering
\includegraphics[height=2cm]{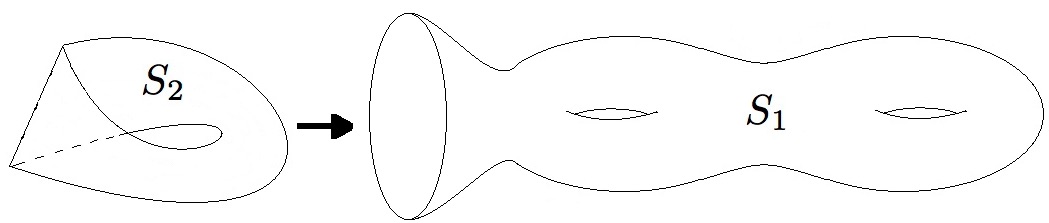}
\caption{A non-orientable surface decomposed into $S_{1}$, an orientable surface with one boundary component, and $S_{2}$, a Möbius band.}
\end{figure}

\begin{figure}[ht]
\centering
\includegraphics[height=2cm]{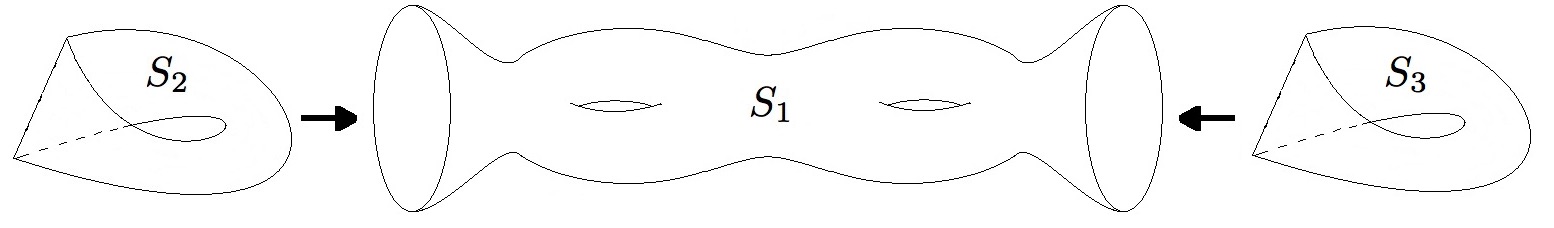}
\caption{A non-orientable surface decomposed into $S_{1}$, an orientable surface with two boundary components, and $S_{2}$ and $S_{3}$, both Möbius bands.}
\end{figure}

By \cite{jankins1983lectures}, there is a unique fibered and orientable $3$-manifold that fibers over the Möbius band - it is the twisted $I$-bundle over the Klein bottle fibered meridianally. This is double covered by $S^{1}\times A$, where $A$ is an annulus. We let $M_{2}$ fiber over $S_{2}$ and $M_{3}$ fiber over $S_{3}$. $M_{1}$ is constructed by taking a trivially fibered $S^{1}\times S_{1}$ and drilling $n$ fibers and refilling according to the invariants $(q_{i},p_{i})$. 

Now, $M_{1}$ has a torus boundary and can be framed longitunally by a fiber and meridianally by a section to the bundle. $M_{2}$ has either one or two tori in its' boundary and each can framed longitudinally by a fiber and meridianally by a section to the bundle (necessarily before the drilling and filling).

The manifold $M$ represented by $$(g,n_{2}|(q_{1},p_{1}),\ldots,(q_{n},p_{n})), q_{i}>0$$ is then obtained by gluing the torus boundaries of $M_{1}$ and $M_{2}$, or $M_{1}$ and $M_{2}$ and $M_{3}$ by attaching longitudes to longitudes and meridians to meridians. That is, according to the identity matrix between the first homology groups generated by representatives of the longitudes and meridians.

We now consider a particular double cover of a Seifert manifold that fibers over a non-orientable base space. This well known result is noted in W. D. Neumann and F. Raymond's paper \textit{Seifert manifolds, plumbing, $\mu$-invariant and orientation reversing maps} \cite{neumann1978seifert}, but we show it here constructively as the construction is utilized later on in the proof of our main result Theorem 3.1.

\begin{prop}
${M=(g+1,n_{2}|(q_{1},p_{1}),\ldots,(q_{n},p_{n}))}$ is double covered by $\hfill$ $\tilde{M}=(g,o_{1}|(q_{1},p_{1}),(q_{1},p_{1}),\ldots,(q_{n},p_{n}),(q_{n},p_{n}))$.
\end{prop}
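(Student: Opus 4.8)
The plan is to build the double cover explicitly from the decomposition of $M$ into pieces $M_1$ and $M_2$ (or $M_1$, $M_2$, $M_3$) described above, by taking the orientation double cover of the base surface and lifting the Seifert structure. First I would recall that the non-orientable surface $S$ of genus $g+1$ underlying the base orbifold $B$ decomposes as $S_1 \# S_2$ with $S_1$ orientable of genus $g$ (with one boundary circle) and $S_2$ a Möbius band, as in Figure 1; the orientation double cover $\tilde S \to S$ then restricts over $S_1$ to a trivial two-sheeted cover (two disjoint copies $S_1 \sqcup S_1$, since $S_1$ is orientable) and over the Möbius band $S_2$ to its orientation double cover, which is an annulus $A$. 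Gluing these along the preimages of the cutting curve, one checks by an Euler characteristic count (or directly from the classification of surfaces) that $\tilde S$ is the orientable surface of genus $g$ with two boundary circles, matching the base of $\tilde M$ as claimed. The case with two Möbius bands $S_2, S_3$ is handled the same way and gives a consistent answer.

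Next I would promote this to the level of Seifert bundles. Over $S_1 \sqcup S_1$ the lift of $M_1 = (S^1 \times S_1 \text{ drilled and refilled along } (q_i,p_i))$ is simply two disjoint copies of $M_1$, hence contributes the fillings $(q_i,p_i),(q_i,p_i)$ for each $i$ — this is where the doubling of the Seifert invariants comes from. Over the Möbius band $S_2$, the unique orientable Seifert manifold $M_2$ (the twisted $I$-bundle over the Klein bottle, fibered meridianally) is, as stated in the excerpt citing \cite{jankins1983lectures}, double covered by $S^1 \times A$, and this double cover is exactly the restriction of $q$ over the orientation double cover $A \to S_2$. I would then verify that the covering maps on the pieces agree on the common boundary tori — here one uses that the gluing of $M_1$ to $M_2$ is by the identity on $H_1$ of the boundary (longitude to longitude, meridian to meridian), and that the lifted gluings on $\tilde M$ are likewise by the identity — so the piecewise-defined two-sheeted covers assemble into a genuine covering map $q : \tilde M \to M$. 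Finally I would confirm that $\tilde M$ is orientable (it fibers over the orientable surface $\tilde S$ and the fibration lifts the orientable one downstairs) and that its Seifert invariants are precisely $(g, o_1 | (q_1,p_1),(q_1,p_1),\ldots,(q_n,p_n),(q_n,p_n))$, by reading off the base genus, the orientability symbol, and the collected fillings from the assembled pieces.

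The main obstacle I expect is the bookkeeping at the gluing tori: one must be careful that the "longitude = fiber, meridian = section" framings on $M_1$ and on $M_2$ lift compatibly to framings on the corresponding boundary tori of $\tilde M_1$ and $\tilde M_2$, so that the identity-matrix gluing downstairs lifts to the identity-matrix gluing upstairs and no extra twisting (which would alter the Euler/obstruction data or the $p_i$'s) is introduced. In particular, over the Möbius band the section used for the meridian framing is only defined after removing a neighborhood of the relevant curve, and one must track how a section of $M_2 \to S_2$ pulls back to a section of $S^1 \times A \to A$; checking that this pullback is the "obvious" product section (up to isotopy) is the delicate point. Once that compatibility is in hand, the identification of $\tilde M$ as the stated Seifert manifold is immediate from the construction, and the degree-two count is automatic since $\tilde S \to S$ has degree two and the fibers map homeomorphically.
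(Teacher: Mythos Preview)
Your strategy coincides with the paper's: decompose the base into an orientable piece plus one or two M\"obius bands, use that the twisted $I$-bundle over the Klein bottle is double covered by $S^1\times A$, and observe that over the orientable piece the cover is the trivial two-sheeted one. The paper phrases this top-down---cutting $\tilde B$ into an annulus (or two) and two symmetric halves, then projecting---while you phrase it bottom-up via the orientation cover of $B$; the content is the same construction viewed from the two ends.

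There is, however, a genuine numerical error that would derail your Euler-characteristic check. The orientable piece $S_1$ does \emph{not} have genus $g$: since the M\"obius band and the cutting circle both have Euler characteristic zero, $\chi(S_1)=\chi(S)=1-g$, so $S_1$ has genus $g/2$ with one boundary circle, and this only makes sense when $g$ is even. When $g$ is odd you are forced to cut off two M\"obius bands, and then $S_1$ has genus $(g-1)/2$ with two boundary circles; this is exactly the even/odd case split the paper carries out explicitly and which you wave at but do not work through. With the corrected genera, two copies of $S_1$ joined along their boundaries by the annulus (respectively, two annuli) assemble to a \emph{closed} orientable surface of genus $g$---your statement that $\tilde S$ has ``two boundary circles'' is a further slip, since $\tilde S$ is the orientation double cover of a closed surface and is therefore closed. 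Once this bookkeeping is fixed your argument goes through, and the gluing-compatibility issue you single out as the main obstacle is indeed the one point the paper leaves implicit.
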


\begin{proof}
 
The underlying space of $\tilde{B}$ is a connected sum of $g$ tori. We consider two cases, when $g$ is even or odd.

\underline{Case 1:} g is even

We cut the underlying space of $\tilde{B}$ along two simple closed curves to leave $\tilde{B}_{1}\cong A$ and $\tilde{B}_{2},\tilde{B}_{3}$ which are connected sums of $\frac{g}{2}$ tori with a disc removed. See Figure 3 for the case where $g=2$.

\begin{figure}[ht]
\centering
\includegraphics[height=2cm]{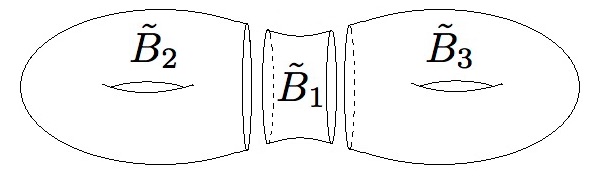}
\caption{$\tilde{B}$ (in the case where $g=2$) decomposed into $\tilde{B}_{1},\tilde{B}_{2},\tilde{B}_{3}$. (Cone points not shown).}
\end{figure}

Now we can assume that the cone points of $\tilde{B}$ lie evenly between $\tilde{B}_{2}$ and $\tilde{B}_{3}$. So let $\tilde{M}_{1}$ and $\tilde{M}_{2},\tilde{M}_{3}$ be the cut $3$-manifolds that fiber over $\tilde{B}_{1}$ and $\tilde{B}_{2},\tilde{B}_{3}$. Then $\tilde{M}_{1}\cong S^{1}\times A$ and $\tilde{M}_{2},\tilde{M}_{3}$ are constructed by taking a genus $\frac{g}{2}$ surface with a disc removed crossing with $S^{1}$ and replacing $n$ fibers in each according to the given Seifert invariants. 

So now $\tilde{M}_{1}$ double covers the twisted $I$-bundle over the Klein bottle which we denote as $M_{1}$. $\tilde{M}_{2},\tilde{M}_{3}$ double cover the manifold $M_{2}$ obtained by taking a genus $\frac{g}{2}$ surface with one disc removed cross $S^{1}$ and replacing $n$ fibers according to half of the given Seifert invariants. This in fact a $2$-sheeted cover. 

Regluing along the torus boundary gives the required double cover. The underlying surface of $B$ is therefore a connected sum of $\mathbb{P}^{2}$ and a genus $\frac{g}{2}$ surface. This is a connected sum of $g+1$ copies of $\mathbb{P}^{2}$.

\underline{Case 2:} g is odd

This time, we cut the underlying space of $\tilde{B}$ along four simple closed curves to leave $\tilde{B}_{1},\tilde{B}_{2}\cong A$ and $\tilde{B}_{3},\tilde{B}_{4}$ connected sums of $\frac{g-1}{2}$ tori with two discs removed. See Figure 4 for the case where $g=3$.

\begin{figure}[ht]
\centering
\includegraphics[height=3cm]{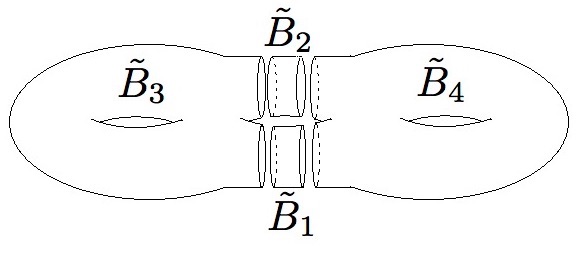}
\caption{$\tilde{B}$ (in the case where $g=3$) decomposed into $\tilde{B}_{1},\tilde{B}_{2},\tilde{B}_{3},\tilde{B}_{4}$. (Cone points not shown)}
\end{figure}

Now we can again assume that the cone points of $\tilde{B}$ lie evenly between $\tilde{B}_{3},\tilde{B}_{4}$. Label $\tilde{M}_{1}$ and $\tilde{M}_{2}$ as the cut $3$-manifolds that fiber over $\tilde{B}_{1}$ and $\tilde{B}_{2}$. Then $\tilde{M}_{1},\tilde{M}_{2}\cong S^{1}\times A$. Label $\tilde{M}_{3},\tilde{M}_{4}$ as the cut $3$-manifolds that fiber over $\tilde{B}_{3}$ and $\tilde{B}_{4}$. Then $\tilde{M}_{3},\tilde{M}_{4}$ are each constructed from taking a genus $\frac{g-1}{2}$ surface with two discs removed cross $S^{1}$ and replacing $n$ fibers according to the given Seifert invariants. 

$\tilde{M}_{1}$ and $\tilde{M}_{2}$ double cover two disjoint twisted $I$-bundles over the Klein bottle which we denote as $M_{1}$ and $M_{2}$. $\tilde{M}_{3}$ and $\tilde{M}_{4}$ double cover the manifold $M_{3}$ obtained by taking a genus $\frac{g-1}{2}$ surface with two discs removed cross $S^{1}$ and replacing $n$ fibers according to half of the given Seifert invariants. Again, this is in fact a $2$-sheeted cover. 

Regluing along the two torus boundaries gives the required double cover. The underlying surface of $B$ is therefore a connected sum of $\mathbb{P}^{2}\#\mathbb{P}^{2}$ and a genus $\frac{g-1}{2}$ surface. This is a connected sum of $g+1$ copies of $\mathbb{P}^{2}$.

\end{proof}

\begin{exmp}

This proposition is a general case of the known result of J. Kalliongis and R. Ohashi in \textit{Finite group actions on prism manifolds which preserve a Heegaard Klein bottle} \cite{kalliongis2011finite}, that a prism manifold $M=(1,n_{2}|(q,p))$ is double covered by a lens space $\tilde{M}=(0,o_{1}|(q,p),(q,p))$.

\end{exmp}

We henceforth use the term \textit{orientable base space double cover} to indicate the double cover shown in the previous proposition. 

We now quote Theorem 1.1. from \cite{neumann1978seifert} regarding Seifert invariants:

\begin{thm}

Let $M$ and $M'$ be two orientable Seifert manifolds with associated Seifert invariants $(g,o_{1}|(q_{1},p_{1}),\ldots,(q_{s},p_{s}))$ and $(g,o_{1}|(q_{1}',p_{1}'),\ldots,(q_{t}',p_{t}'))$ respectively. Then $M$ and $M'$ are orientation-preservingly diffeomorphic by a fiber-preserving diffeomorphism if and only if, after reindexing the Seifert pairs if necessary, there exists an n such that:

i) $q_{i}=q_{i}'$ for $i=1,\ldots,n$ and $q_{i}=q_{j}'=1$ for $i,j>n$ 

ii) $p_{i}\equiv p_{i}'\textrm{ (mod }q_{i})$ for $i=1,\ldots,n$

iii) $$\sum_{i=1}^{s}\frac{p_{i}}{q_{i}}=\sum_{i=1}^{t}\frac{p_{i}'}{q_{i}'}$$

\end{thm}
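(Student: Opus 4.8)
The statement is the classical fiber-preservingly, orientation-preservingly diffeomorphism classification of oriented Seifert fibrations, so the plan is to prove the two implications separately: sufficiency by exhibiting a generating set of moves on the symbol, and necessity by extracting invariants from the underlying orbifold, from the exceptional fiber neighbourhoods, and from the Euler number.

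For the \emph{if} direction I would first record three elementary operations on the symbol $(g,o_{1}|(q_{1},p_{1}),\ldots,(q_{s},p_{s}))$ that are realized by orientation-preserving, fiber-preserving diffeomorphisms: (a) permuting the pairs; (b) inserting or deleting a pair $(1,0)$; and (c) the compensated shift replacing $(q_{i},p_{i})$ by $(q_{i},p_{i}+q_{i})$ while simultaneously replacing $(q_{j},p_{j})$ by $(q_{j},p_{j}-q_{j})$ for some $j\neq i$. Geometrically, (a) is relabelling the Dehn fillings; (b) is the observation that a $(1,0)$-filling merely reinserts a regularly fibered solid torus, i.e. an ordinary fiber, so it changes neither $M$ nor its fibration; and (c) reflects the fact that the manifold is built from $S^{1}\times F$ using a chosen section over the $n$-punctured $F$, and changing that section alters the meridians used in the fillings by integer multiples of the fiber class whose total is zero (the section being a global object). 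Each such change can be implemented by a diffeomorphism preserving the orientations of $M$, of $B_{U}$, and of the fibers. It then remains to check the bookkeeping: given two symbols satisfying (i)--(iii), use (b) to arrange that both have at least one pair with $q=1$; use (c) repeatedly, with $j$ ranging over the $q=1$ indices, to turn each essential $p_{i}$ into $p_{i}'$ (legitimate since $p_{i}-p_{i}'\in q_{i}\mathbb{Z}$ by (ii)), the cumulative change being absorbed into the $q=1$ pairs; invoke (iii) to conclude that the two symbols now agree on all essential pairs and have the same total contribution from the trivial pairs; and finally reconcile and cancel the remaining $(1,\cdot)$ pairs using (b) and (c).

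For the \emph{only if} direction, let $h\colon M\to M'$ be fiber-preserving and orientation-preserving. Since $h$ sends fibers to fibers it induces a homeomorphism $\bar h\colon B_{U}\to B_{U}'$ of underlying surfaces carrying cone points to cone points preserving their orders; hence the genus $g$ agrees, the symbol $o_{1}$ agrees (this is assumed), and after reindexing one gets $q_{i}=q_{i}'$ for the essential indices $i\le n$ with all remaining pairs trivial of order $1$, which is (i). For (ii), restrict $h$ to a fibered solid-torus neighbourhood $V_{i}$ of the $i$-th exceptional fiber; then $h(V_{i})$ is a fibered solid-torus neighbourhood of an exceptional fiber of $M'$ of the same order, and $h|_{V_{i}}$ is an orientation-preserving fiber-preserving homeomorphism of fibered solid tori, which forces the local invariants (rotation numbers) to coincide mod $q_{i}$, i.e. $p_{i}\equiv p_{i}'\pmod{q_{i}}$. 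For (iii), use that the rational Euler number $e=-\sum p_{i}/q_{i}$ is an invariant of the oriented Seifert-fibered manifold — it is the obstruction to a section of the associated orbifold $S^{1}$-bundle, independent of all presentation choices, and it is negated exactly when the orientation of the total space is reversed — so $e(M)=e(M')$, which is (iii).

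The main obstacle is the necessity of (iii): one must justify carefully that $\sum p_{i}/q_{i}$ is a genuine invariant of $M$ rather than an artifact of the chosen section, and that it is preserved (not negated) precisely because $h$ preserves the orientation of $M$; here one has to keep straight the three interacting orientations — total space, base, and fiber — noting that $h$ is not assumed to preserve fiber orientation, so a reversal there must be compensated by a reversal of the base orientation, under which $e$ is unchanged. On the sufficiency side the one nonroutine point is verifying move (c), namely identifying the diffeomorphism produced by a change of section and confirming its orientation behaviour. As the surrounding text indicates, the most economical route in practice is simply to cite \cite{neumann1978seifert} for the statement; the sketch above is meant to indicate what that citation contains.
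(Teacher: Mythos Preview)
Your sketch is a reasonable outline of the standard proof of this classification theorem, but you should note that the paper does not prove this statement at all: it is simply quoted as Theorem~1.1 from \cite{neumann1978seifert} and used as input. So there is no ``paper's proof'' to compare against; the paper's approach is precisely the one you mention in your last sentence, namely to cite the result. Your proposal goes well beyond what the paper does, supplying the content behind the citation. The outline itself is sound --- the three moves on the symbol for sufficiency and the orbifold/local-invariant/Euler-number triple for necessity are exactly the ingredients in Neumann--Raymond's argument --- and you have correctly flagged the only genuinely delicate point, the well-definedness and orientation behaviour of the Euler number.
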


From this and Proposition 2.1, we get:

\begin{cor}

Let $M$ and $M'$ be two orientable Seifert manifolds with associated Seifert invariants $(g,n_{2}|(q_{1},p_{1}),\ldots,(q_{s},p_{s}))$ and $(g,n_{2}|(q_{1}',p_{1}'),\ldots,(q_{t}',p_{t}'))$ respectively. Then $M$ and $M'$ are orientation-preservingly diffeomorphic by a fiber-preserving diffeomorphism if and only if, after reindexing the Seifert pairs if necessary, there exists an $n$ such that:

i) $q_{i}=q_{i}'$ for $i=1,\ldots,n$ and $q_{i}=q_{j}'=1$ for $i,j>n$ 

i) $p_{i}\equiv p_{i}'\textrm{ (mod }q_{i})$ for $i=1,\ldots,n$

iii) $$\sum_{i=1}^{s}\frac{p_{i}}{q_{i}}=\sum_{i=1}^{t}\frac{p_{i}'}{q_{i}'}$$

\end{cor}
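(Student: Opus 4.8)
The plan is to deduce this corollary directly from Theorem 2.3 (Neumann--Raymond) combined with the orientable base space double cover from Proposition 2.1. The key observation is that the non-orientable data $(g,n_2|(q_1,p_1),\ldots,(q_s,p_s))$ is faithfully reflected in the orientable double cover $(\,\lfloor\cdot\rfloor,o_1|(q_1,p_1),(q_1,p_1),\ldots,(q_s,p_s),(q_s,p_s)\,)$ of Proposition 2.1 (with the appropriate genus for the orientable base, namely $g-1$ in the stated indexing, up to the parity bookkeeping already carried out there). So first I would observe that a fiber- and orientation-preserving diffeomorphism $h:M\to M'$ lifts to a fiber- and orientation-preserving diffeomorphism $\tilde h:\tilde M\to\tilde M'$ between the orientable base space double covers: the covering $q:\tilde M\to M$ is canonically associated to the Seifert structure (it is the cover induced by the orientation double cover of the base orbifold's underlying space), so any fiber-preserving map downstairs is covered by a fiber-preserving map upstairs, and orientation on $M$ together with orientation on the fiber pins down an orientation on $\tilde M$ that $\tilde h$ preserves.

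Next I would run the equivalence in both directions. For the ``only if'' direction: given such an $h:M\to M'$, lift to $\tilde h:\tilde M\to\tilde M'$ and apply Theorem 2.3 to the doubled invariant lists $(q_1,p_1),(q_1,p_1),\ldots$ and $(q_1',p_1'),(q_1',p_1'),\ldots$. Conditions (i)--(iii) of Theorem 2.3 for the doubled lists are easily seen to be equivalent to conditions (i)--(iii) for the original lists: the matching index $n$ for the doubled list, after reindexing, may be taken to pair each repeated slot with its partner, so the $q_i=q_i'$ and $p_i\equiv p_i' \pmod{q_i}$ conditions transfer verbatim; and the Euler-number sum for the double cover is exactly twice the rational sum $\sum p_i/q_i$ for $M$ (this is precisely the computation underlying Proposition 2.1), so (iii) for $\tilde M,\tilde M'$ is equivalent to (iii) for $M,M'$. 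For the ``if'' direction: assume (i)--(iii) hold for $M,M'$; then they hold for the doubled lists, so by Theorem 2.3 there is a fiber- and orientation-preserving diffeomorphism $\tilde M\to\tilde M'$. The point is then to choose this upstairs diffeomorphism to be equivariant with respect to the two covering translations $\tau,\tau'$, so that it descends to $M\to M'$; concretely, one builds it piece by piece on the decomposition $\tilde M_1\cup\tilde M_2\cup\cdots$ of Proposition 2.1 respecting the involution that swaps the two Möbius-band pieces (or that acts on $\tilde M_1$ covering the twisted $I$-bundle), exactly as the normal form diffeomorphisms in the proof of Theorem 2.3 are assembled.

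The main obstacle I anticipate is this last equivariance issue: Theorem 2.3 as quoted only asserts existence of \emph{some} fiber-preserving orientation-preserving diffeomorphism upstairs, not one commuting with the covering translation, so one cannot simply ``divide by $\tau$'' without an extra argument. I would handle it by inspecting the standard construction behind Theorem 2.3 --- the diffeomorphism is built from (a) a fiber-preserving diffeomorphism of the base orbifolds' underlying surfaces realizing the reindexing, (b) Dehn twists along the fibers adjusting the $p_i \pmod{q_i}$ data, and (c) a ``vertical'' move transferring Euler number between an exceptional fiber and a regular fiber. Each of these can be chosen symmetrically under the deck involution: a symmetric surface diffeomorphism exists because the orientation double cover is canonical and the genus/puncture data on the two halves match; the $p_i$-adjusting twists can be applied simultaneously to a repeated pair; and the Euler-number transfer can be carried out in the $S^1\times A$ piece $\tilde M_1$ (which covers the twisted $I$-bundle equivariantly) or split evenly between the two symmetric pieces. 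Once the upstairs diffeomorphism commutes with $\tau$ and $\tau'$ it descends to the desired $h:M\to M'$, which is fiber- and orientation-preserving since $q$ and $q'$ are fiber-preserving local diffeomorphisms compatible with the chosen orientations. I would also remark that, alternatively, the whole corollary can be phrased purely combinatorially: the normal forms of Theorem 2.3 for the doubled lists are invariant under the pairing, so descend to normal forms for the non-orientable invariants, giving both directions at once. The parity split ($g$ even vs.\ odd) in Proposition 2.1 does not affect any of this, since it only changes how many annulus pieces appear, not the invariant lists or the Euler-number accounting.
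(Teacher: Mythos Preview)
Your approach is correct and is exactly the one the paper indicates: the paper offers no proof beyond the sentence ``From this and Proposition 2.1, we get,'' so you have in fact supplied the details the paper omits, including the equivariance step for the ``if'' direction that the paper passes over entirely. One minor point: what you call Theorem~2.3 is labeled Theorem~2.2 in the paper, and the lifting of $h$ to $\tilde h$ that you invoke in the ``only if'' direction is precisely the covering-space argument carried out later in the proof of Theorem~3.1.
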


It is therefore possible to define the normalized form as $(g,n_{2}|(q_{1},p_{1}),\ldots,(q_{n},p_{n}),(1,b))$ where $0<p_{i}<q_{i}$ and $b$ is an integer called the \textit{obstruction class}. 

\section{Isomorphism between $Diff_{+}^{fp}(M)$ and $Cent_{+}^{fop}(\tau)$}

The main goal of this section is to show a correspondence between the finite, fiber- and orientation-preserving actions on $M$ and a subset of the finite, fiber-preserving actions on $\tilde{M}$, the orientable base space double cover. 

We state and prove the following proposition which establishes a one-to-one correspondence between the finite, fiber-preserving actions on $M$ and the finite, fiber-orientation-preserving actions on $\tilde{M}$ that commute with the centralizer of the covering translation, that is $Cent_{+}^{fop}(\tau)=\{f\in Diff_{+}^{fop}(\tilde{M})|f\circ \tau = \tau \circ f\}$.

\begin{thm}
\label{thm:associativity}
Let $M$ be an orientable Seifert fibered manifold that fibers over an orbifold $B$ that has non-orientable underlying space and $q:\tilde{M}\rightarrow M$ be the orientable base space double cover. Let the covering translation be $\tau:\tilde{M}\rightarrow\tilde{M}$. Then there exists an isomorphism between $Diff_{+}^{fp}(M)$ and $Cent_{+}^{fop}(\tau)$.
\end{thm}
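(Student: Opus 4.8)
The plan is to produce two mutually inverse group homomorphisms, a lifting map $\Psi\colon Diff_{+}^{fp}(M)\to Cent_{+}^{fop}(\tau)$ and a descent map $\Phi\colon Cent_{+}^{fop}(\tau)\to Diff_{+}^{fp}(M)$. Everything rests on two structural facts about the covering $q\colon\tilde M\to M$ of Proposition 2.1. First, by that construction the induced double cover $\tilde B_{U}\to B_{U}$ of underlying surfaces is the orientation double cover (the Euler characteristics match: the orientable genus-$g$ surface double covers $\#_{g+1}\mathbb{P}^{2}$), so the covering translation $\tau$ reverses the orientation of $\tilde B_{U}$; since $M$ is orientable, $\tau$ preserves the orientation of $\tilde M$, and after orienting $\tilde M$ so that $q$ is orientation-preserving we conclude that $\tau$ reverses the orientation of the fibers of $\tilde M$. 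Thus $\tau\in Diff^{fp}(\tilde M)$ but $\tau\notin Diff^{fop}(\tilde M)$. Second, $q_{*}\pi_{1}(\tilde M)$ is exactly the kernel of the homomorphism $w\colon\pi_{1}(M)\to\mathbb{Z}_{2}$ that sends a loop $\alpha$ to the class of $p\circ\alpha$ detecting orientation-reversal in $B_{U}$; this index-two subgroup depends only on the Seifert fibration of $M$.

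The descent map $\Phi$ is the easy direction. Given $\tilde g\in Cent_{+}^{fop}(\tau)$, the relation $\tilde g\tau=\tau\tilde g$ makes $\tilde g$ equivariant for the deck group $\langle\tau\rangle\cong\mathbb{Z}_{2}$, so it descends to a map $g=\Phi(\tilde g)$ of $M=\tilde M/\langle\tau\rangle$ with $q\circ\tilde g=g\circ q$; since $q$ is a local diffeomorphism, $g$ and $g^{-1}$ are smooth. As $q$ carries fibers onto fibers and $\tilde g$ is fiber-preserving, $g$ is fiber-preserving; as $\tilde g$ and $q$ preserve orientation, so does $g$. Hence $\Phi(\tilde g)\in Diff_{+}^{fp}(M)$, and $\Phi$ is a homomorphism since $q\circ(\tilde g_{1}\tilde g_{2})=\Phi(\tilde g_{1})\circ\Phi(\tilde g_{2})\circ q$ and $q$ is surjective.

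For $\Psi$, take $f\in Diff_{+}^{fp}(M)$. Being fiber-preserving, $f$ induces a homeomorphism $f_{B_{U}}$ of $B_{U}$, and a homeomorphism of a surface preserves the property of a loop being orientation-reversing; hence $f_{*}$ preserves $\ker w=q_{*}\pi_{1}(\tilde M)$ (the class $w(\alpha)$ is unchanged, the conjugation ambiguity being irrelevant in $\mathbb{Z}_{2}$), so by the lifting criterion $f$ admits a lift $F\colon\tilde M\to\tilde M$ with $q\circ F=f\circ q$. The lifts of $f$ are precisely $F$ and $\tau F$, and each commutes with $\tau$: $F\tau$ is again a lift of $f$, it cannot equal $F$ (else $\tau=\mathrm{id}$), so $F\tau=\tau F$. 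Each lift is orientation-preserving on $\tilde M$ (locally it equals $q^{-1}\circ f\circ q$) and fiber-preserving (it covers the fiber-preserving $f$); whether it preserves or reverses the fiber orientation is locally constant on the connected surface $\tilde B_{U}$, hence constant, and since $\tau$ reverses fiber orientation exactly one of $F,\tau F$ is fiber-orientation-preserving. Define $\Psi(f)$ to be that lift; then $\Psi(f)\in Cent_{+}^{fop}(\tau)$. Because a composition of lifts is a lift of the composition and a composition of fiber-orientation-preserving maps is fiber-orientation-preserving, uniqueness of the distinguished lift gives $\Psi(f_{1}f_{2})=\Psi(f_{1})\Psi(f_{2})$, and $\Psi(\mathrm{id}_{M})=\mathrm{id}_{\tilde M}$ since the other lift of the identity is $\tau$, which is not fiber-orientation-preserving.

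It remains to see that $\Phi$ and $\Psi$ are inverse: $\Phi\circ\Psi=\mathrm{id}$ because $\Psi(f)$ is a lift of $f$, so $q\circ\Psi(f)=f\circ q$ forces $\Phi(\Psi(f))=f$; and $\Psi\circ\Phi=\mathrm{id}$ because for $\tilde g\in Cent_{+}^{fop}(\tau)$ the map $\tilde g$ is a fiber-orientation-preserving lift of $\Phi(\tilde g)$, hence is the distinguished lift, so $\Psi(\Phi(\tilde g))=\tilde g$. The step I expect to be the main obstacle is the lifting argument: one must check carefully that $q$ really is the base-orientation double cover of Proposition 2.1 and that this characterization is respected by every fiber-preserving self-diffeomorphism of $M$, handle the basepoints in the lifting criterion, and then track the fiber orientations (against the orientation bookkeeping of Proposition 2.1) so as to single out the canonical lift unambiguously. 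Note that finiteness is not needed for the theorem: this isomorphism of the full groups restricts to the correspondence of finite subgroups used in the applications.
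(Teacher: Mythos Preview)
Your proposal is correct and follows the same overall architecture as the paper's proof: verify the lifting criterion so that each $f\in Diff_{+}^{fp}(M)$ lifts, select a canonical lift from the two available, check this gives a homomorphism, and handle the other direction by descent. The substantive difference lies in how the lifting criterion is verified. The paper writes down explicit presentations of $\pi_{1}(M)$ and $\pi_{1}(\tilde M)$ from Hempel, splits into the cases $g$ even and $g$ odd, identifies the single generator $x'$ of $\pi_{1}^{orb}(B)$ representing an orientation-reversing loop, and then tracks each generator of $q_{*}\pi_{1}(\tilde M)$ through $f_{*}$ to see that no odd power of $x$ appears. Your argument replaces all of this with the observation that $q_{*}\pi_{1}(\tilde M)=\ker w$ for the base-orientation homomorphism $w\colon\pi_{1}(M)\to\mathbb{Z}_{2}$, which is automatically preserved by any fiber-preserving diffeomorphism because the induced map on $B_{U}$ preserves the orientation character. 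This is cleaner and avoids the case split, at the cost of requiring the reader to accept that the cover built in Proposition~2.1 really is the one classified by $\ker w$; the paper's explicit computation makes this visible in coordinates. You are also slightly more careful than the paper in two places: you explicitly check $F\tau=\tau F$ (the paper asserts the image lands in the centralizer without spelling this out), and you phrase the canonical choice of lift as ``fiber-orientation-preserving'' rather than the paper's ``orientation-preserving on the base space,'' which matches the target group $Cent_{+}^{fop}(\tau)$ more directly (the two conditions are equivalent since both lifts preserve the orientation of $\tilde M$).
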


\begin{proof}
We begin with the following diagram:

$$\begin{array}{ccccc}
 &  & q\\
 & M & \leftarrow & \tilde{M}\\
p & \downarrow &  & \downarrow & \tilde{p}\\
 & B & \leftarrow & \tilde{B}\\
 &  & q_{B}
\end{array} $$

Here, $p,\tilde{p}$ are the bundle maps for $M,\tilde{M}$  respectively, and $q,q_{B}$ are the covering projections for $M,B$ respectively.

The first thing to note is that the finite group actions on $B$ are in one-to-one correspondence with the orientation-preserving group actions on $\tilde{B}$. This is due to T.W. Tucker in his paper \textit{Finite groups acting on surfaces and the genus of a group} \cite{tucker1983finite}.

Now we pick $f\in Diff_{+}^{fp}(M)$ and are required to find $\tilde{f}:\tilde{M}\rightarrow\tilde{M}$, which will be our desired lift of $f$.

Taking $\tilde{x}_{0} \in \tilde{M}$, $x_{0}=(f\circ q)(\tilde{x}_{0})\in M$, and some lift $\tilde{x}_{1} \in \tilde{M}$ of $x_{0}$ as basepoints of $M$ and $\tilde{M}$ we have the following diagram:

$$\begin{array}{ccccc}
 &  & \tilde{f} \\
 & (\tilde{M},\tilde{x}_{0}) & \dashrightarrow & (\tilde{M},\tilde{x}_{1})\\
q & \downarrow &  & \downarrow & q\\
 & (M,q(\tilde{x}_{0})) & \rightarrow & (M,x_{0})\\
 &  & f
\end{array} $$

This induces the following diagram on the fundamental groups:

$$\begin{array}{ccccc}
 &  & \tilde{f}_{*} \\
 & \pi_{1}(\tilde{M},\tilde{x}_{0}) & \dashrightarrow & \pi_{1}(\tilde{M},\tilde{x}_{1})\\
q^{0}_{*} & \downarrow &  & \downarrow & q^{1}_{*}\\
 & \pi_{1}(M,q(\tilde{x}_{0})) & \rightarrow & \pi_{1}(M,x_{0})\\
 &  & f_{*}
\end{array} $$

We proceed to show that $f_{*}(q^{0}_{*}(\pi_{1}(\tilde{M},\tilde{x}_{0}))) \subset q^{1}_{*}(\pi_{1}(\tilde{M},\tilde{x}_{1}))$ which is the sufficient condition to lift $f$ according to the lifting criterion. For further reference, see A. Hatcher's book \textit{Algebraic Topology} \cite{hatcher2002algebraic}.

We have that $M=(g+1,n_{2}|(q_{1},p_{1}),\ldots,(q_{n},p_{n}))$ and break into the two cases where $g$ is even and odd.

We use the notation and representation of the following fundamental groups from J. Hempel's book \textit{$3$-Manifolds} \cite{hempel20043}, as well as M. Boileau, S. Maillot, and J. Porti's paper \textit{Three-dimensional orbifolds and their geometric structures} \cite{boileau2003three}.

\underline{Case 1:} g is even
\begin{align*}
\pi_{1}(M,x_{0})=	&\big\langle a_{1},b_{1}\ldots, a_{\frac{g}{2}},b_{\frac{g}{2}},x,c_{1},\ldots,c_{n},t|a_{i}t=ta_{i},b_{i}t=tb_{i}, c_{i}t=tc_{i}, \\ & \quad xt=t^{-1}x,c_{j}^{q_{j}}t^{p_{j}}=1,c_{1}\cdots c_{n}[a_{1},b_{1}]\cdots[a_{\frac{g}{2}},b_{\frac{g}{2}}]x^{-2}=1 \big\rangle \\
\pi_{1}^{orb}(B,p(x_{0}))=&	\big\langle a'_{1},b'_{1}\ldots,a'_{\frac{g}{2}},b'_{\frac{g}{2}},x',c'_{1},\ldots,c'_{n}|c_{j}^{\prime q_{j}}=1,c'_{1}\cdots c'_{n}[a'_{1},b'_{1}]\cdots[a'_{\frac{g}{2}},b'_{\frac{g}{2}}]x^{\prime-2}=1\big\rangle
\end{align*} Here we have that $p_{*}(a_{i})=a'_{i}, p_{*}(b_{i})=b'_{i}, p_{*}(c_{i})=c'_{i}$, and $p_{*}(x)=x'$.

The only generator of $\pi_{1}^{orb}(B,p(x_{0}))$ that represents an orientation-reversing loop is $x'$. This follows from the fact that $\pi_{1}^{orb}(B,p(x_{0}))$ is the fundamental group of the underlying space with the extra generators $c_{1},\ldots,c_{n}$ and associated relations. Then reference to a standard text on Algebraic Topology such as J. Munkres' \textit{Topology} \cite{munkres} shows that the fundamental group of a nonorientable surface without boundary can be generated $a'_{1},b'_{1}\ldots,a'_{\frac{g}{2}},b'_{\frac{g}{2}}$ each of which can be represented by orientation-preserving loops and $x'$ represented by an orientation-reversing loop. Note that $c_{1},\ldots,c_{n}$ are also represented by orientation-preserving loops.

\underline{Case 2:} g is odd
\begin{align*}
\pi_{1}(M,x_{0})=	& \big\langle a_{1},b_{1}\ldots,a_{\frac{g-1}{2}},b_{\frac{g-1}{2}},x,y,c_{1},\ldots,c_{n},t|a_{i}t=ta_{i},b_{i}t=tb_{i}, c_{i}t=tc_{i}, \\ & \quad xt=t^{-1}x,yt=ty,c_{j}^{q_{j}}t^{p_{j}}=1,c_{1}\cdots c_{n}[a_{1},b_{1}]\cdots[a_{\frac{g-1}{2}},b_{\frac{g-1}{2}}]xyx^{-1}y=1\big\rangle \\ 
\pi_{1}^{orb}(B,p(x_{0}))=&	\big\langle a'_{1},b'_{1}\ldots,a'_{\frac{g-1}{2}},b'_{\frac{g-1}{2}},x',y',c'_{1},\ldots,c'_{n}|c_{j}^{\prime q_{j}}=1,\\ &
\quad c'_{1}\cdots c'_{n}[a'_{1},b'_{1}]\cdots[a'_{\frac{g-1}{2}},b'_{\frac{g-1}{2}}]x'y'x^{\prime-1}y'=1\big\rangle
\end{align*} With $p_{*}(a_{i})=a'_{i}$, $p_{*}(b_{i})=b'_{i}$, $p_{*}(c_{i})=c'_{i}$, $p_{*}(x)=x'$ and $p_{*}(y)=y'$. Again, the only generator of $\pi_{1}^{orb}(B,p(x_{0}))$ that represents an orientation-reversing loop is $x'$. 

We now consider the orientable base space double cover $\tilde{M}=(g,o_{1}|(q_{1},p_{1}),(q_{1},p_{1}),\ldots,(q_{n},p_{n}),(q_{n},p_{n}))$ and again using \cite{hempel20043} we have:\begin{align*}
\pi_{1}(\tilde{M},\tilde{x}_{1})=	& \big\langle \tilde{a}_{1},\tilde{b}_{1},\ldots,\tilde{a}_{g},\tilde{b}_{g},\tilde{c}_{1},\ldots,\tilde{c}_{2n},\tilde{t}|\tilde{a}_{i}\tilde{t}=\tilde{t}\tilde{a}_{i},\tilde{b}_{i}\tilde{t}=\tilde{t}\tilde{b}_{i},\\ & \quad \tilde{c}_{i}\tilde{t}=\tilde{t}\tilde{c}_{i},\tilde{c}_{j}^{q_{j}}\tilde{t}^{p_{j}}=1,\tilde{c}_{j+n}^{q_{j}}\tilde{t}^{p_{j}}=1,\tilde{c}_{1}\cdots\tilde{c}_{2n}[\tilde{a}_{1},\tilde{b}_{1}]\cdots[\tilde{a}_{g},\tilde{b}_{g}]=1 
\big\rangle
\end{align*}Note that the covering translation $\tau:\tilde{M}\rightarrow\tilde{M}$ leaves invariant either a torus that separates $\tilde{M}$ into two diffeomorphic halves or a pair of tori that together separate $\tilde{M}$ into two diffeomorphic halves. This is respectively in the cases where the genus of $\tilde{B}$ is even or odd and follows from Proposition 2.1.

If we call these two halves $\tilde{M}_{1}$ and $\tilde{M}_{2}$, then $\tau:\tilde{M}\rightarrow\tilde{M}$ exchanges $\tilde{M}_{1}$ and $\tilde{M}_{2}$. The restricted projection can then be taken so that:

\begin{enumerate}
\item
$q^{1}_{*}(\tilde{t})=t$ and $q^{1}_{*}(\tilde{c}_{i})=c_{i}$ for $i=1,\ldots,n$.

\item
For $g$ even, $q^{1}_{*}(\tilde{a}_{i})=a_{i}$ and $q^{1}_{*}(\tilde{b}_{i})=b_{i}$ for $i=1,\ldots,\frac{g}{2}$.

\item
For $g$ odd, $q^{1}_{*}(\tilde{a}_{i})=a_{i}$, $q^{1}_{*}(\tilde{b}_{i})=b_{i}$ for $i=1,\ldots,\frac{g-1}{2}$ and $q^{1}_{*}(\tilde{a}_{\frac{g+1}{2}})=x^{2}$, $q^{1}_{*}(\tilde{b}_{\frac{g+1}{2}})=y$. 

\end{enumerate}

This again follows from Proposition 2.1 where we assume that the base point $\tilde{x}_{1}$ lies in $\tilde{M}_{1}$ and  - for $g$ even - $\tilde{a}_{1},\tilde{b}_{1},\ldots,\tilde{a}_{\frac{g}{2}},\tilde{b}_{\frac{g}{2}},\tilde{c}_{1},\ldots,\tilde{c}_{n},\tilde{t}$ are represented by loops that lie in $\tilde{M}_{1}$. For $g$ odd, $\tilde{a}_{1},\tilde{b}_{1},\ldots,\tilde{a}_{\frac{g-1}{2}},\tilde{b}_{\frac{g-1}{2}},\tilde{c}_{1},\ldots,\tilde{c}_{n},\tilde{t}$ are represented by loops that lie in $\tilde{M}_{1}$.

Now suppose that $\tilde{d}$ is one of $\tilde{a}_{1},\tilde{b}_{1},\ldots,\tilde{a}_{g},\tilde{b}_{g},\tilde{c}_{1},\ldots,\tilde{c}_{2n}$ and note that $p_{*}(q^{1}_{*}(\tilde{d}))=p_{B*}(\tilde{p}_{*}(\tilde{d}))$ cannot be orientation-reversing as the only generator of $\pi_{1}^{orb}(B,p(x_{0}))$ that represents an orientation-reversing loop is $x'$. Hence $q^{1}_{*}(\tilde{d})$ cannot be represented with a word involving a single power of $x$ and therefore $q^{1}_{*}(\pi_{1}(\tilde{M},\tilde{x}_{1}))$ has generators:
$$a_{1},b_{1}\ldots,a_{\frac{g}{2}},b_{\frac{g}{2}},c_{1}\ldots,c_{n}$$ 
in the even case and:
$$a_{1},b_{1}\ldots,a_{\frac{g}{2}},b_{\frac{g}{2}},c_{1}\ldots,c_{n},y$$
in the odd.

We now consider $f_{*}(q^{0}_{*}(\pi_{1}(\tilde{M},\tilde{x}_{0})))$ and use the following representation: \begin{align*}
\pi_{1}(\tilde{M},\tilde{x}_{0})=	& \big\langle \tilde{\alpha}_{1},\tilde{\beta}_{1},\ldots,\tilde{\alpha}_{g},\tilde{\beta}_{g},\tilde{\gamma}_{1},\ldots,\tilde{\gamma}_{2n},\tilde{\delta}|\tilde{\alpha}_{i}\tilde{\delta}=\tilde{\delta}\tilde{\alpha}_{i},\tilde{\beta}_{i}\tilde{\delta}=\tilde{\delta}\tilde{\beta}_{i},\\ & \quad \tilde{\gamma}_{i}\tilde{\delta}=\tilde{\delta}\tilde{\gamma}_{i},\tilde{\gamma}_{j}^{q_{j}}\tilde{\delta}^{p_{j}}=1,\tilde{\gamma}_{j+n}^{q_{j}}\tilde{\delta}^{p_{j}}=1,\tilde{\gamma}_{1}\cdots\tilde{\gamma}_{2n}[\tilde{\alpha}_{1},\tilde{\beta}_{1}]\cdots[\tilde{\alpha}_{g},\tilde{\beta}_{g}]=1 
\big\rangle
\end{align*} For $g$ even, as $f$ is fiber-preserving and orientation-preserving, we have that $f_{*}(q^{1}_{*}(\tilde{\gamma}))=t^{\pm1}\in q^{0}_{*}(\pi_{1}(\tilde{M},x_{0}))$. Also, $f \circ q$ will send each of $\alpha_{1},\beta_{1}\ldots,\alpha_{\frac{g}{2}},\beta_{\frac{g}{2}},\gamma_{1}\ldots,\gamma_{n}$  to some word $w$ on $a_{1},b_{1}\ldots,a_{\frac{g}{2}},b_{\frac{g}{2}},c_{1}\ldots,c_{n}$ multiplied by $t$ to some power. This follows from the fact that each of $\alpha_{1},\beta_{1}\ldots,\alpha_{\frac{g}{2}},\beta_{\frac{g}{2}},\gamma_{1}\ldots,\gamma_{n}$ project to orientation-preserving loops under $q$ and $f$ is orientation-preserving. Then $wt^{d}\in q^{0}_{*}(\pi_{1}(\tilde{M},x_{0}))$.

For $g$ odd, $f$ is fiber-preserving and orientation-preserving, we have that $f_{*}(q^{1}_{*}(\tilde{\gamma}))=t^{\pm1}\in q^{0}_{*}(\pi_{1}(\tilde{M},x_{0}))$. Also, $f \circ q$ will send each of $\alpha_{1},\beta_{1}\ldots,\alpha_{\frac{g}{2}},\beta_{\frac{g}{2}},\gamma_{1}\ldots,\gamma_{n}$ will be sent to some word $w$ on $a_{1},b_{1}\ldots,a_{\frac{g}{2}},b_{\frac{g}{2}},c_{1}\ldots,c_{n},y$ multiplied by $t$ to some power. This again follows from the fact that each of $\alpha_{1},\beta_{1}\ldots,\alpha_{\frac{g}{2}},\beta_{\frac{g}{2}},\gamma_{1}\ldots,\gamma_{n}$ project to orientation-preserving loops under $q$ and $f$ is orientation-preserving. Then $wt^{d}\in q^{0}_{*}(\pi_{1}(\tilde{M},x_{0}))$.

So that $f_{*}(q^{0}_{*}(\pi_{1}(\tilde{M},\tilde{x}_{0}))) \subset q^{1}_{*}(\pi_{1}(\tilde{M},\tilde{x}_{1}))$ holds.

We have proved the existence of a lift, but now $f$ can lift to some $\tilde{f}$ and $\tilde{f}\circ\tau$. Only one of these is orientation-preserving on the base space. We take $\tilde{f}$ to be the lift that is orientation-preserving on the base space.

This defines the correspondence from $Diff_{+}^{fp}(M)$ to $Cent_{+}^{fop}(\tau)$.

We now show that this is a homomorphism. Take $f_{1},f_{2}\in Diff_{+}^{fp}(M)$ and calculate: \begin{align*}
q\circ\widetilde{f_{1}\circ f_{2}}	&=f_{1}\circ f_{2}\circ q\\
	&=f_{1}\circ q\circ\widetilde{f_{2}}\\
	&=q\circ\tilde{f_{1}}\circ\tilde{f}_{2}
\end{align*} It then follows that either $\widetilde{f_{1}\circ f_{2}}=\tilde{f_{1}}\circ\tilde{f}_{2}$ or $\widetilde{f_{1}\circ f_{2}}=\tilde{f_{1}}\circ\tilde{f}_{2}\circ\tau$. The second case is not possible as we chose the lift to be orientation-preserving on the base space.

Hence it is a homomorphism.

To see injectivity, note that $(q\circ id)(x)=(f\circ q)(x)$ implies that $q(x)=f(q(x))$ and hence $f=id$.

For surjectivity, we note that as $\tilde{f}\in Cent_{+}^{fop}(\tau)$ we can project to some $f\in Diff_{+}^{fp}(M)$. This follows from standard covering space theory, again see \cite{munkres}.

Hence there is an isomorphism from $Cent_{+}^{fop}(\tau)$ to $Diff_{+}^{fp}(M)$. 

\end{proof}

From this we directly yield the corollary:

\begin{cor}
Let $M$ be an orientable Seifert fibered manifold that fibers over an orbifold $B$ that has non-orientable underlying space and $q:\tilde{M}\rightarrow M$ be the orientable base space double cover. Then there is a one-to-one correspondence between the finite, orientation and fiber-preserving group actions on $M$ and the finite orientation and fiber orientation-preserving group actions on $\tilde{M}$ that commute with the covering translation $\tau:\tilde{M}\rightarrow\tilde{M}$.
\end{cor}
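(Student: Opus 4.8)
The plan is to read this off directly from Theorem~\ref{thm:associativity}. Let $\Phi\colon Diff_{+}^{fp}(M)\to Cent_{+}^{fop}(\tau)$ denote the isomorphism constructed in that theorem (the assignment $f\mapsto\tilde f$, where $\tilde f$ is the unique lift of $f$ that is orientation-preserving on the base space $\tilde B$). By definition a finite, orientation- and fiber-preserving $G$-action on $M$ is an injective homomorphism $\varphi\colon G\to Diff_{+}^{fp}(M)$, while a finite, orientation- and fiber-orientation-preserving $G$-action on $\tilde M$ that commutes with $\tau$ is an injective homomorphism $\tilde\varphi\colon G\to Diff_{+}^{fop}(\tilde M)$ whose image commutes with $\tau$; since $Cent_{+}^{fop}(\tau)=\{h\in Diff_{+}^{fop}(\tilde M)\mid h\circ\tau=\tau\circ h\}$, such a $\tilde\varphi$ is exactly an injective homomorphism $\tilde\varphi\colon G\to Cent_{+}^{fop}(\tau)$. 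So both sides of the claimed correspondence are just the sets of injective homomorphisms from a finite group $G$ into the two groups that Theorem~\ref{thm:associativity} identifies.

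First I would define the correspondence in both directions: send $\varphi$ to $\Phi\circ\varphi$, and send $\tilde\varphi$ to $\Phi^{-1}\circ\tilde\varphi$. Because $\Phi$ is a group isomorphism, $\Phi\circ\varphi$ is again a homomorphism; it is injective since both $\Phi$ and $\varphi$ are, and its image is finite since $G$ is, so $\Phi\circ\varphi$ is a genuine finite $G$-action of the required type, and symmetrically for $\Phi^{-1}\circ\tilde\varphi$. The identities $\Phi^{-1}\circ(\Phi\circ\varphi)=\varphi$ and $\Phi\circ(\Phi^{-1}\circ\tilde\varphi)=\tilde\varphi$ are immediate, so the two assignments are mutually inverse bijections, which is the asserted one-to-one correspondence.

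If one wants the correspondence at the level of actions up to equivalence, I would add the observation that $\Phi$ respects conjugacy: if $\varphi'(g)=h\,\varphi(g)\,h^{-1}$ for all $g$ and some $h\in Diff_{+}^{fp}(M)$, then applying the homomorphism $\Phi$ gives $(\Phi\circ\varphi')(g)=\Phi(h)\,(\Phi\circ\varphi)(g)\,\Phi(h)^{-1}$ with $\Phi(h)\in Cent_{+}^{fop}(\tau)$, and conversely via $\Phi^{-1}$; hence the bijection descends to equivalence classes. There is no real obstacle here: the only thing to verify is the purely formal point that being "fiber-orientation-preserving and commuting with $\tau$" is precisely membership in $Cent_{+}^{fop}(\tau)$, after which Theorem~\ref{thm:associativity} applies verbatim — all the substantive work (the lifting-criterion computation, the selection of the base-space-orientation-preserving lift, and the homomorphism, injectivity, and surjectivity arguments) having already been done in its proof.
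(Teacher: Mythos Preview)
Your proposal is correct and follows exactly the paper's approach: the paper derives this corollary directly from Theorem~\ref{thm:associativity} with no additional argument, and you have simply made explicit the formal observation that an isomorphism of groups induces a bijection between injective homomorphisms from a finite group $G$ into each. Your additional remark about conjugacy is not in the paper but is harmless and correct.
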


\section{Using previous results}

We now give a summary of the construction of a finite, fiber- and orientation-preserving group action on a Seifert manifold with orientable base space $M=(g,o_{1}|(q_{1},p_{1}),\ldots,(q_{n},p_{n}))$ given in \cite{peet2018}:

We decompose $M$ into $\hat{M}$ and $X$ where $\hat{M}\cong S^{1}\times F$ is trivially fibered and $X$ is a disjoint union of $n$ solid tori. We then have a gluing map $d:\partial X\rightarrow\partial\hat{M}$, so that for a fibering product structure $k_{\hat{M}}:S^{1}\times F\rightarrow\hat{M}$, there is some $k_{X}:S^{1}\times(D_{1}\cup\ldots\cup D_{n})\rightarrow X$ and restricted positively oriented product structures $k_{\partial V_{i}}:S^{1}\times S^{1}\rightarrow\partial V_{i}$ and $k_{T_{i}}:S^{1}\times S^{1}\rightarrow T_{i}$ such that $(k_{T_{i}}^{-1}\circ d|_{\partial V_{i}}\circ k_{\partial V_{i}})(u,v)=(u^{x_{i}}v^{p_{i}},u^{y_{i}}v^{q_{i}})$.

We pick a finite, fiber-preserving group action on $\hat{M}$ by first choosing some (not-necessarily effective) group action $\varphi_{1}:G\rightarrow Diff(S^{1})$. This will necessarily be of the form: $$\varphi_{1}(g)(u)=\theta_{1}(g)u^{\alpha(g)}$$

Here $\theta_{1}:G\rightarrow S^{1}$ and $\alpha:G\rightarrow\{-1,1\}$. 

We then choose a (not-necessarily effective) group action $\varphi_{2}:G\rightarrow Diff(F)$ such that if we parameterize each component of $\partial F$ in the same way as in Section 2 and then express $\partial F=\{(v,i)|v\in S^{1},i\in\{1,\ldots,n\}\}$, we can write: $$\varphi_{2}(g)|_{\partial F}(v,i)=(\theta_{2}(i,g)v^{\alpha(g)},\beta(g)(i))$$
Here $\theta_{2}:\{1,\ldots,n\}\times G\rightarrow S^{1}$, and $\beta:G\rightarrow perm(\{1,\ldots,n\})$ are such that $\beta(g)(i)=j$ only if $(q_{i},p_{i})=(q_{j},p_{j})$.

The precise nature of each of these maps is shown in \cite{peet2018}.

Then we define our group action $\varphi:G\rightarrow Diff(\hat{M})$ by: $$(k_{\hat{M}}^{-1}\circ\varphi(g)\circ k_{\hat{M}})(u,x)=(\varphi_{1}(g)(u),\varphi_{2}(g)(x))$$
So now we can fully express $\varphi:G\rightarrow Diff(\hat{M})$ on the boundary of $\hat{M}$ by: $$(k_{T_{\beta(g)(i)}}^{-1}\circ\varphi(g)\circ k_{T_{i}})(u,v)=(\theta_{1}(g)u^{\alpha(g)},\theta_{2}(i,g)v^{\alpha(g)})$$
So we can now induce an action on $\partial X$ by: $$\psi:G\rightarrow Diff(\partial X),\psi(g)=d^{-1}\circ\varphi(g)|_{\partial\hat{M}}\circ d$$
We here note that:$$k_{X}^{-1}(X)=\{(u,v,i)|u\in S^{1},v\in D,i\in\{1,\ldots,n\}\}$$
Where $D$ is the unit disc. Hence the action $\psi:G\rightarrow Diff(X)$ straightforwardly extends by coning inwards.

So now we have defined finite, fiber-preserving actions on $\hat{M}$ and $X$ such that they agree under the gluing map $d:\partial X\rightarrow\partial\hat{M}$. This completes the construction.

We define actions that can be constructed in this way as \textit{extended product actions}. 

We can now state the main result from \cite{peet2018}:

\begin{thm}
 Let $M$ be an orientable Seifert $3$-manifold that fibers over an orientable base space. Let $\varphi:G\rightarrow Diff_{+}^{fp}(M)$ be a finite group action on $M$ such that the obstruction class can expressed as $$b=\sum_{i=1}^{m}(b_{i}\cdot\#Orb_{\varphi}(\alpha_{i}))$$ for a collection of fibers $\{\alpha_{1},\ldots,\alpha_{m}\}$ and integers $\{b_{1},\ldots,b_{m}\}$. Then $\varphi$ is an extended product action. 
 \end{thm}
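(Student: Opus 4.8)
The plan is to realize $\varphi$ concretely through the decomposition $M=\hat M\cup_{d}X$ used in the definition of an extended product action, and to show that the stated decomposition of the obstruction class is precisely what permits the Dehn fillings to be carried out $G$-equivariantly. First I would pass to the base. Since $\varphi$ is finite and fiber-preserving it descends to a finite action $\varphi_{B_U}:G_{B_U}\to Diff(B_U)$ on the orientable surface $B_U$, which may be taken to be standard (isometric for a suitable geometric structure). Hence the critical fibers are permuted within classes of equal Seifert invariant, and I can select a $G$-invariant family of fibered solid-torus neighborhoods $X$ of the union of critical fibers. Writing $\hat M=M\setminus\mathrm{int}(X)$, the restriction $\varphi|_{\hat M}$ is a finite, fiber-preserving action on a trivially fiberable $S^{1}$-bundle over $F=B_U$ with the corresponding open disks removed.

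The next step is to produce a $G$-invariant fibering product structure $k_{\hat M}:S^{1}\times F\to\hat M$. Because $F$ has nonempty boundary the bundle is trivial, and finiteness of $G$ lets me average an auxiliary fiberwise structure to obtain a $G$-invariant section, so that $\psi(g)=k_{\hat M}^{-1}\circ\varphi(g)\circ k_{\hat M}$ is a product action. Conjugating the circle factor into $O(2)$ gives the required form $\varphi_{1}(g)(u)=\theta_{1}(g)u^{\alpha(g)}$, and on each boundary component $\varphi_{2}(g)|_{\partial F}(v,i)=(\theta_{2}(i,g)v^{\alpha(g)},\beta(g)(i))$. Here orientation-preservation of $\varphi$ on $M$ forces the fiber-flip $\alpha(g)$ to be compatible with the orientation behavior of $\varphi_{2}$ on $F$, and $\beta(g)(i)=j$ only when $(q_i,p_i)=(q_j,p_j)$, since a fiber- and orientation-preserving diffeomorphism sends critical fibers to critical fibers of the same type. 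This exhibits $\varphi|_{\hat M}$ as the product part of an extended product action and fixes the induced boundary action on $\partial X$.

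The main obstacle is the extension across $X$, and this is where the hypothesis is used. The coning construction extends the boundary action over each solid torus, but for the resulting extended product action to reproduce $M$ it must realize the same obstruction class $b$. The crucial observation is that for the gluing to be $G$-equivariant the filling data must be constant along each $G$-orbit of fibers; consequently an orbit of a fiber $\alpha_i$ can only contribute an integer multiple $b_i\cdot\#Orb_{\varphi}(\alpha_i)$ to the total obstruction, so every extended product action has obstruction class of the form $\sum_i b_i\,\#Orb_{\varphi}(\alpha_i)$. The assumption $b=\sum_{i=1}^{m}b_i\,\#Orb_{\varphi}(\alpha_i)$ provides exactly such a decomposition. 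I would therefore augment the filling by adjoining, equivariantly along each orbit $\alpha_i$, a family of $(1,b_i)$-fillings on regular fibers. By the Neumann--Raymond classification (Theorem 2.2) these $q=1$ pairs do not change the fiber- and orientation-preserving diffeomorphism type of $M$ beyond their contribution $\sum_i b_i\,\#Orb_{\varphi}(\alpha_i)=b$ to the total $\sum p/q$, so the construction rebuilds $M$ with its correct Seifert data.

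Finally I would check that the extended product action thus assembled equals $\varphi$. The two actions coincide on $\hat M$ by construction and induce the same equivariant boundary gluing with identical Seifert invariants, so by uniqueness of the coning extension they agree up to a fiber-preserving equivariant isotopy. The genuinely delicate point throughout is the simultaneous bookkeeping that keeps the filling both $G$-equivariant and obstruction-class-correct: without the hypothesis the orbit-constancy constraint can make $b$ unattainable, so the condition is not merely convenient but necessary for $\varphi$ to be an extended product action.
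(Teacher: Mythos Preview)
This theorem is not proved in the present paper: it is quoted as the main result of the author's earlier paper \cite{peet2018} (see the sentence immediately preceding the statement, ``We can now state the main result from \cite{peet2018}''), and no argument for it appears here. Consequently there is no proof in this paper to compare your proposal against.

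That said, judging by the summary of the construction given in Section~4 and by the one place in this paper where the underlying mechanism is invoked (the proof of Theorem~4.2), your outline is broadly in the right spirit but has a real gap at the step where you produce a $G$-invariant fibering product structure on $\hat M$. You write that ``finiteness of $G$ lets me average an auxiliary fiberwise structure to obtain a $G$-invariant section.'' Sections here are $S^{1}$-valued, and $S^{1}$ is not convex, so there is no naive averaging available; this is precisely the nontrivial content. In the proof of Theorem~4.2 the paper appeals to ``Theorem~4.4 in \cite{peet2018} (this result is an adaptation of Theorem~2.3 in P.~Scott and W.~Meeks' paper \cite{Meeks1986})'' to obtain an invariant product structure, which strongly suggests that the intended argument in \cite{peet2018} rests on the Meeks--Scott machinery rather than on an elementary averaging. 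Your treatment of the obstruction condition---that orbit-constancy of the filling data forces $b$ to be a sum of orbit-size multiples, and that the hypothesis supplies exactly such a decomposition realized by adjoining equivariant $(1,b_i)$ fillings along the orbits of the $\alpha_i$---matches the role the condition visibly plays in the construction summary and in the Neumann--Raymond normalization (Theorem~2.2 here), and is almost certainly the intended idea.
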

 
 So now given an action $\tilde{\varphi}:G\rightarrow Diff_{+}^{fop}(\tilde{M})$ that satsifies the obstruction condition in Theorem 4.1, we can yield a decomposition of $\tilde{M}$ into $\widehat{\tilde{M}}$ and a collection of solid tori $\{V_{1},\ldots,V_{m}\}$ with a product structure $k:S^{1}\times F\rightarrow\widehat{\tilde{M}}$. Then there is a restricted action $\widehat{\tilde{\varphi}}:G\rightarrow Diff(\hat{\tilde{M}})$ such that each $(k^{-1}\circ\widehat{\tilde{\varphi}}(g)\circ k)(u,x)=(\widehat{\tilde{\varphi}}_{1}(g)(u),\widehat{\tilde{\varphi}}_{2}(g)(x))$ is a product map.
 
 This then leads us to the following result:

\begin{thm}
Let $M$ be an orientable Seifert fibered manifold that fibers over an orbifold $B$ that has non-orientable underlying space and obstruction class $b$. Let $q:\tilde{M}\rightarrow M$ be the orientable base space double cover and let $\tilde{\varphi}:G\rightarrow Diff_{+}^{fop}(\tilde{M})$ satisfy the obstruction condition. Then $\tilde{\varphi}:G\rightarrow Diff_{+}^{fop}(\tilde{M})$ is equivalent to an action that commutes with the covering translation $\tau:\tilde{M}\rightarrow\tilde{M}$ if and only if $\widehat{\tilde{\varphi}}_{1}(g)(u)=\epsilon(g)u$ for $\epsilon(g)=\pm1$.
\end{thm}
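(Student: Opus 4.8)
The plan is to push everything into the product region $\widehat{\tilde M}\cong S^1\times F$ that the obstruction condition provides, where $\tilde\varphi$ is a coordinatewise map and, after a suitable choice of decomposition, so is $\tau$, and then to use the elementary fact that a fiber rotation $u\mapsto\theta_1(g)u$ commutes with a fiber-reversing map $u\mapsto\bar u$ if and only if $\theta_1(g)=\overline{\theta_1(g)}$, i.e. $\theta_1(g)=\pm1$.

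First I would record the structure. As $\tilde\varphi$ satisfies the obstruction condition, Theorem 4.1 and the construction preceding it give a decomposition $\tilde M=\widehat{\tilde M}\cup V_1\cup\cdots\cup V_m$, a fibering product structure $k\colon S^1\times F\to\widehat{\tilde M}$, and $(k^{-1}\widehat{\tilde\varphi}(g)k)(u,x)=(\widehat{\tilde\varphi}_1(g)(u),\widehat{\tilde\varphi}_2(g)(x))$; since $\tilde\varphi$ is fiber-orientation-preserving the exponent $\alpha(g)$ of Section 4 is $1$, so $\widehat{\tilde\varphi}_1(g)(u)=\theta_1(g)u$ with $\theta_1\colon G\to S^1$ a homomorphism. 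Next I would observe that $\tau$ preserves the orientation of $\tilde M$ (else $M$ would be non-orientable) while covering the orientation-reversing deck transformation of $\tilde B\to B$, so $\tau$ reverses the orientation of the fibers; and by Proposition 2.1 it interchanges the two diffeomorphic halves $\tilde M_1,\tilde M_2$ of $\tilde M$. Using Proposition 2.1 I would then choose the decomposition $\tau$-equivariantly — the $2n$ exceptional fibers of $\tilde M$ come in $n$ pairs exchanged by $\tau$, and the extra regular fibers drilled out to absorb the obstruction class can likewise be taken in $\tau$-pairs — so that $\widehat{\tilde M}$ is $\tau$-invariant, $\tau$ permutes the $V_i$, and $k$ can be chosen with $k^{-1}\tau k(u,x)=(c\,\bar u,\tau_2(x))$ for a constant $c\in S^1$ and a free orientation-reversing involution $\tau_2$ of $F$; normalizing by a global fiber rotation we may take $c=1$.

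With this normalization both implications become short. For the forward direction, replace $\tilde\varphi$ by the equivalent action commuting with $\tau$; it again satisfies the obstruction condition, hence has a presentation as above relative to a $\tau$-equivariant decomposition (built, as in the proof of Theorem 3.1, from the invariant torus or pair of tori). Restricting $\tilde\varphi(g)\circ\tau=\tau\circ\tilde\varphi(g)$ to the $S^1$-factor gives $\theta_1(g)\bar u=\overline{\theta_1(g)}\,\bar u$, so $\theta_1(g)\in\{-1,+1\}$; take $\epsilon=\theta_1$. For the converse, assume $\widehat{\tilde\varphi}_1(g)(u)=\epsilon(g)u$ with $\epsilon(g)=\pm1$. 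On $\widehat{\tilde M}$ the $S^1$-components of $\tilde\varphi(g)$ and $\tau$ now commute automatically (real scalars commute with conjugation), so $\tilde\varphi(g)\circ\tau=\tau\circ\tilde\varphi(g)$ on $\widehat{\tilde M}$ is equivalent to $\widehat{\tilde\varphi}_2(g)\circ\tau_2=\tau_2\circ\widehat{\tilde\varphi}_2(g)$, i.e. to the induced orientation-preserving $G$-action on $\tilde B$ commuting with the deck transformation of $\tilde B\to B$. By the Tucker correspondence used in the proof of Theorem 3.1 this base action is conjugate to the lift of a $G$-action on $B$, hence to one commuting with $\tau_2$; conjugating $\tilde\varphi$ by a fiber-preserving, fiber-orientation-preserving lift of that base conjugacy that is the identity on each fiber leaves $\widehat{\tilde\varphi}_1$ equal to $\epsilon$ and makes $\tilde\varphi$ commute with $\tau$ on $\widehat{\tilde M}$. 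Finally, since $\tau$ permutes the $V_i$ in pairs and the two maps already commute on each $\partial V_i\subset\widehat{\tilde M}$, their canonical coning extensions (Section 4) commute on the $V_i$ as well, so the modified action commutes with $\tau$ on all of $\tilde M$.

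The step I expect to be the main obstacle is the $\tau$-equivariant normalization: choosing the decomposition of $\tilde M$ and the trivialization $k$ so that $\tau$ becomes the coordinatewise map $(u,x)\mapsto(\bar u,\tau_2(x))$, since this is precisely what makes the commutation condition localize to the fiber factor, and it requires reconciling the decomposition supplied by the obstruction condition with the two halves of Proposition 2.1 and checking that the constant $c$ can be cleared. A secondary subtlety, in the converse direction, is correcting the base action by a conjugacy whose lift does not reintroduce a nontrivial fiber rotation; this is possible because fiber rotations act trivially on the base, so the lift can be taken to be the identity on each fiber.
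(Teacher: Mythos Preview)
Your argument is correct and follows the same skeleton as the paper: restrict to the trivially fibered piece $\widehat{\tilde M}$, put $\hat\tau$ into product form $(u,x)\mapsto(\bar u,\tau_2(x))$, reduce commutation with $\tau$ to the two coordinate conditions $\theta_1(g)=\overline{\theta_1(g)}$ and $\widehat{\tilde\varphi}_2(g)\tau_2=\tau_2\widehat{\tilde\varphi}_2(g)$, dispose of the second via Tucker, and extend over the solid tori.

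The one genuine difference is precisely the step you flagged as the main obstacle. You build the $\tau$-equivariant decomposition and trivialization by hand from Proposition~2.1 (pairing up exceptional and drilled fibers, then normalizing the constant $c$). The paper instead invokes a black box: Theorem~4.4 of \cite{peet2018}, an adaptation of the Meeks--Scott result \cite{Meeks1986}, which directly furnishes a second product structure $k'\colon S^1\times F\to\widehat{\tilde M}$ left invariant by $\hat\tau$, and then conjugates $\widehat{\tilde\varphi}$ by $k'\circ k^{-1}$ to compare. This sidesteps the explicit $\tau$-equivariant construction entirely, at the cost of quoting an outside theorem; your route is more self-contained but carries the verification burden you anticipated. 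In the converse direction you are also a bit more explicit than the paper, which simply asserts that the base commutation ``is true again by \cite{tucker1983finite}'' without spelling out, as you do, that Tucker gives commutation only after a base conjugacy and that this conjugacy must be lifted fiberwise trivially so as not to disturb $\widehat{\tilde\varphi}_1=\epsilon$.
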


\begin{proof}
$\tau:\tilde{M}\rightarrow\tilde{M}$ is fiber-preserving, so we can consider the restricted map $\hat{\tau}:\widehat{\tilde{M}}\rightarrow\widehat{\tilde{M}}$. This leaves a product structure $k':S^{1}\times F\rightarrow\widehat{\tilde{M}}$ invariant by Theorem 4.4 in \cite{peet2018} (this result is an adaptation of Theorem 2.3 in P. Scott and W. Meeks' paper \textit{Finite group actions on 3-manifolds} \cite{Meeks1986}). So now consider the equivalent action defined by $k'\circ k^{-1}\circ\widehat{\tilde{\varphi}}(g)\circ k\circ k'^{-1}$. We then have that: $$k'^{-1}\circ(k'\circ k^{-1}\circ\widehat{\tilde{\varphi}}(g)\circ k\circ k'^{-1})\circ\hat{\tau}\circ k'=(k^{-1}\circ\widehat{\tilde{\varphi}}(g)\circ k)\circ(k'^{-1}\circ\hat{\tau}\circ k')$$ $$k'^{-1}\circ\hat{\tau}\circ(k'\circ k^{-1}\circ\widehat{\tilde{\varphi}}(g)\circ k\circ k'^{-1})\circ k'=(k'^{-1}\circ\hat{\tau}\circ k')\circ(k^{-1}\circ\widehat{\tilde{\varphi}}(g)\circ k)$$
Now: $$(k^{-1}\circ\widehat{\tilde{\varphi}}(g)\circ k)\circ(k'^{-1}\circ\hat{\tau}\circ k')(u,x)=(\widehat{\tilde{\varphi}}_{1}(g)(u^{-1}),\widehat{\tilde{\varphi}}_{2}(g)(\hat{\tau}_{B}(x)))$$ $$(k'^{-1}\circ\hat{\tau}\circ k')\circ(k^{-1}\circ\widehat{\tilde{\varphi}}(g)\circ k)(u,x)=(\widehat{\tilde{\varphi}}_{1}(g)(u)^{-1},\hat{\tau}_{B}(\widehat{\tilde{\varphi}}_{2}(g)(x)))$$
These are equal for all $(u,x)$ if and only if $\widehat{\tilde{\varphi}}_{1}(g)(u)=\epsilon(g)u$ for $\epsilon(g)=\pm1$ (noting that the orientation of the fibers is preserved) and $\tilde{\varphi}_{2}(g)$ commutes with the induced covering translation $\tau_{B}:\tilde{B}\rightarrow\tilde{B}$. This second is true again by \cite{tucker1983finite}. 

By extension across the fillings, it follows that there is an action equivalent to $\varphi$ that commutes with $\tau$.
\end{proof}

\begin{cor}
Let $M$ be an orientable Seifert fibered manifold that fibers over an orbifold that has underlying space the real projective plane $\mathbb{P}^{2}$ and obstruction class $b$. Let $q:\tilde{M}\rightarrow M$ be the orientable base space double cover and let $\tilde{\varphi}:G\rightarrow Diff_{+}^{fop}(\tilde{M})$ satisfy the obstruction condition. Then $\tilde{\varphi}:G\rightarrow Diff_{+}^{fop}(\tilde{M})$ is equivalent to an action that commutes with the covering translation $\tau:\tilde{M}\rightarrow\tilde{M}$ if and only if $\widehat{\tilde{\varphi}}_{1}(g)(u)=\epsilon(g)u$ for $\epsilon(g)=\pm1$.
\end{cor}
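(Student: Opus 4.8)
The plan is to observe that Corollary 4.3 is the special case of Theorem 4.2 obtained by specializing the non-orientable base orbifold $B$ to have underlying space $\mathbb{P}^{2}$, i.e.\ $M=(1,n_{2}|(q_{1},p_{1}),\ldots,(q_{n},p_{n}))$ so that in the notation of Proposition 2.1 we have $g=0$. First I would note that $\mathbb{P}^{2}$ with cone points is indeed a non-orientable underlying space, so all hypotheses of Theorem 4.2 are met verbatim; in particular the orientable base space double cover $q:\tilde{M}\to M$ exists (here $\tilde{M}=(0,o_{1}|(q_{1},p_{1}),(q_{1},p_{1}),\ldots,(q_{n},p_{n}),(q_{n},p_{n}))$ fibers over $S^{2}$), and the covering translation $\tau$ together with its restriction $\hat\tau:\widehat{\tilde{M}}\to\widehat{\tilde{M}}$ behaves exactly as in the proof of Theorem 4.2.

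Then I would invoke Theorem 4.2 directly: since $\tilde\varphi$ satisfies the obstruction condition, we obtain the decomposition $\tilde{M}=\widehat{\tilde{M}}\cup\{V_{1},\ldots,V_{m}\}$ with product structure $k:S^{1}\times F\to\widehat{\tilde{M}}$ and restricted product action $\widehat{\tilde\varphi}(g)$ with first coordinate $\widehat{\tilde\varphi}_{1}(g)$, and Theorem 4.2 asserts that $\tilde\varphi$ is equivalent to an action commuting with $\tau$ if and only if $\widehat{\tilde\varphi}_{1}(g)(u)=\epsilon(g)u$ with $\epsilon(g)=\pm1$. This is exactly the statement of the corollary, so no further argument is needed beyond checking that specializing the base to $\mathbb{P}^{2}$ introduces no degeneracy. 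The one point worth a sentence is that for $g=0$ the surface $S_{2}$ in the Classification Theorem decomposition is $\mathbb{P}^{2}$ (the "$g$ even" case of Proposition 2.1 with $g=0$), so $\tilde{B}\cong S^{2}$ and $\tau$ leaves invariant a single separating torus, and the product-structure machinery of Section 4 and \cite{peet2018} applies without change.

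The only conceivable obstacle is verifying that the base surface $\mathbb{P}^{2}$, having the smallest possible non-orientable genus, does not cause the intermediate pieces $M_{1}$ (the twisted $I$-bundle over the Klein bottle) or $\hat M\cong S^{1}\times F$ to degenerate in a way that breaks the hypotheses of Theorem 4.2; but this is exactly the $g=0$ instance already covered by Proposition 2.1 and Example 2.2 (the prism-manifold/lens-space case), so there is nothing new to prove. Hence the corollary follows immediately from Theorem 4.2, and I would phrase the proof as a one-line specialization rather than repeating the product-action computation. I expect no genuine difficulty here; the content is entirely contained in Theorem 4.2.
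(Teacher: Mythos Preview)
Your proposal is correct and matches the paper's own proof, which consists of the single sentence ``This follows directly as an application of Theorem 4.2.'' Your additional remarks about the $g=0$ case, the $S^{2}$ base of $\tilde{M}$, and the non-degeneracy of the construction are accurate but go beyond what the paper bothers to record.
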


\begin{proof}
This follows directly as an application of Theorem 4.2.
\end{proof}

It can be seen that the obstruction condition will always be satisfied in the above corollary as the obstruction class for the orientable double cover is always even. This will be established in a future paper dealing with the specific case when the Seifert manifold fibers over an orbifold with underlying elliptic surface ($S^{2}$ or $\mathbb{P}^{2}$).

So we can now use these corollaries and Theorem 3.1 to see that given a finite fiber- and orientation-preserving action on an orientable Seifert manifold with non-orientable base space, the action can be constructed as a projected action of the extended product action type shown in \cite{peet2018}, provided the lifted action satisfies the obstruction condition.

We call such actions \textit{projected extended product actions}.

We give an example to illustrate this:

\begin{exmp}
We consider the prism manifold $M=(1,n_{2}|(q,p))$. This manifold fibers over the nonorientable orbifold $\mathbb{P}^{2}(m)$. $M$ is double covered by a lens space $\tilde{M}=(0,o_{1}|(q,p),(q,p))$. So then given an a fiber- and orientation-preserving action $\varphi: G \rightarrow Diff^{fp}_{+}(M)$, we can use Theorem 3.1 to derive a unique action $\tilde{\varphi}: G \rightarrow Diff^{fp}_{+}(\tilde{M})$ with $\tilde{\varphi}(G) \subset Cent^{fop}_{+}(\tau)$.

Then, noting that the obstruction class of $\tilde{M}$ is $2b$ if the obstruction class of $M$ is $b$, $\tilde{\varphi}$ can be constructed as an extended product action if it satisfies the obstruction condition.

Then $\varphi$ can be constructed as a projected extended product action.

\end{exmp}

\section{Group structures}

We now establish the specific structure of the groups that act in the constructed manner by using the following proposition from \cite{peet2018}.

\begin{prop}
 Suppose that $\varphi:G\rightarrow Diff(S^{1})\times Diff(F)$ is a finite group action with $\varphi(g)(u,x)=(\varphi_{S^{1}}(g)(u),\varphi_{F}(g)(x))$ such that $\varphi_{S^{1}}(g)$ is orientation-preserving if and only if $\varphi_{F}(g)$ is orientation-preserving. Suppose that there exists $g_{-}\in G$ such that $\varphi_{S^{1}}(g_{-})$ is orientation-reversing and $g_{-}^{2}=1$. Then $G$ is isomorphic to a subgroup of a semidirect product of $\mathbb{Z}_{n}\times\varphi_{F}(G)_{+}$ and $\mathbb{Z}_{2}$.
\end{prop}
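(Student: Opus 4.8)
The plan is to exploit the hypothesis that $\varphi_{S^1}(g)$ is orientation-preserving precisely when $\varphi_F(g)$ is orientation-preserving, which says that the sign homomorphism $\alpha\colon G\to\{-1,1\}$ (recording whether $\varphi_{S^1}(g)$, equivalently $\varphi_F(g)$, is orientation-reversing) is well-defined, with $\ker\alpha=G_+$ the subgroup acting orientation-preservingly on both factors. First I would analyze $G_+$: on $S^1$ a finite orientation-preserving action is conjugate to rotations, so $\varphi_{S^1}|_{G_+}$ factors through a cyclic group; combined with the injectivity of $\varphi$ this gives an embedding $G_+\hookrightarrow \mathbb{Z}_n\times\varphi_F(G_+)$ for suitable $n$ (the order of the rotation subgroup), and since $\varphi_F(G_+)=\varphi_F(G)_+$ we get $G_+\le \mathbb{Z}_n\times\varphi_F(G)_+$.

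Next I would handle the extension $1\to G_+\to G\xrightarrow{\alpha}\mathbb{Z}_2\to 1$. If $\alpha$ is trivial we are done with $G=G_+$; otherwise the element $g_-$ furnished by hypothesis satisfies $\alpha(g_-)=-1$ and $g_-^2=1$, so $\langle g_-\rangle\cong\mathbb{Z}_2$ is a complement to $G_+$ in $G$, i.e. $G\cong G_+\rtimes\mathbb{Z}_2$ where the $\mathbb{Z}_2$-action on $G_+$ is conjugation by $g_-$. It then remains to see that this conjugation action on $G_+$ extends to (or is the restriction of) an action on the ambient $\mathbb{Z}_n\times\varphi_F(G)_+$, so that the embedding $G_+\hookrightarrow\mathbb{Z}_n\times\varphi_F(G)_+$ promotes to an embedding $G=G_+\rtimes\mathbb{Z}_2\hookrightarrow(\mathbb{Z}_n\times\varphi_F(G)_+)\rtimes\mathbb{Z}_2$. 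On the $S^1$ factor, $g_-$ acts as an orientation-reversing diffeomorphism, which after conjugation is a reflection; reflection normalizes the rotation group $\mathbb{Z}_n$ and acts on it by inversion, so conjugation by $g_-$ restricted to $\varphi_{S^1}(G_+)\subset\mathbb{Z}_n$ is inversion. On the $F$ factor, conjugation by $\varphi_F(g_-)$ is some automorphism of $\varphi_F(G)$ preserving $\varphi_F(G)_+$. Taking the product of these two automorphisms of $\mathbb{Z}_n$ and $\varphi_F(G)_+$ defines the semidirect product structure on $(\mathbb{Z}_n\times\varphi_F(G)_+)\rtimes\mathbb{Z}_2$ into which $G$ embeds compatibly.

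I expect the main obstacle to be the compatibility check in the last step: one must verify that conjugation by $g_-$ on $G_+$, transported through the embedding $G_+\hookrightarrow\mathbb{Z}_n\times\varphi_F(G)_+$, genuinely agrees with the product automorphism (inversion on $\mathbb{Z}_n$, conjugation by $\varphi_F(g_-)$ on $\varphi_F(G)_+$) rather than merely some automorphism. This is where the hypothesis $g_-^2=1$ is used crucially — it ensures the chosen complement is honestly $\mathbb{Z}_2$ and that no twisting cocycle obstructs the splitting — and where the structure of orientation-reversing maps of $S^1$ (every such finite-order map squares into a rotation, and of order $2$ is conjugate to a reflection) must be invoked carefully. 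The remaining points (conjugacy of finite $S^1$-actions to rotations, $\ker\alpha$ being a subgroup, $\varphi_F(G_+)=\varphi_F(G)_+$) are standard and I would dispatch them briefly.
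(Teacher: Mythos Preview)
The paper does not actually prove this proposition here; it is quoted verbatim from the author's earlier paper \cite{peet2018}, and the subsequent corollaries in Section~5 simply appeal to ``the proof of Proposition~5.1 in \cite{peet2018}.'' So there is no in-paper argument to compare against.

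Evaluated on its own, your outline is sound and is the natural line of argument. The sign homomorphism $\alpha$ with kernel $G_+$, the embedding $G_+\hookrightarrow\mathbb{Z}_n\times\varphi_F(G)_+$ via the faithfulness of $\varphi$, the splitting of $1\to G_+\to G\to\mathbb{Z}_2\to 1$ by $\langle g_-\rangle$, and the extension of the conjugation action to the ambient product all go through. The compatibility you rightly flag as the delicate step does indeed hold: for $h\in G_+$ one has $\varphi_{S^1}(g_-hg_-^{-1})=\varphi_{S^1}(g_-)\varphi_{S^1}(h)\varphi_{S^1}(g_-)^{-1}=\varphi_{S^1}(h)^{-1}$ (reflection conjugates rotation to its inverse) and $\varphi_F(g_-hg_-^{-1})=\varphi_F(g_-)\varphi_F(h)\varphi_F(g_-)^{-1}$, so the embedding is equivariant for the product automorphism (inversion, conjugation by $\varphi_F(g_-)$), and since $\varphi_F(g_-)^2=\varphi_F(g_-^2)=\mathrm{id}$ this automorphism has order at most $2$. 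One small point worth making explicit when you write it up: the element $g_-$ itself is sent to the generator of the $\mathbb{Z}_2$ factor (not to anything involving $\varphi_F(g_-)$, which lies outside $\varphi_F(G)_+$), and injectivity still holds because $\varphi$ is faithful on $G_+$ and the $\mathbb{Z}_2$ coordinate distinguishes the two cosets.
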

From this, we yield the following corollaries:
\begin{cor}
 Suppose that $\varphi:G\rightarrow Diff(S^{1})\times Diff(F)$ is a finite group action with $\varphi(g)(u,x)=(\varphi_{S^{1}}(g)(u),\varphi_{F}(g)(x))$ such that both $\varphi_{S^{1}}$ and $\varphi_{F}$ are orientation-preserving. Then $G$ is isomorphic to a subgroup of $\mathbb{Z}_{n}\times\varphi_{F}(G)$.
\end{cor}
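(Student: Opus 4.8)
The plan is to specialize Proposition 5.1 to the case where no orientation-reversing element is present. First I would recall the content of Proposition 5.1: under the stated hypothesis that $\varphi_{S^1}(g)$ is orientation-reversing exactly when $\varphi_F(g)$ is, the subgroup $G_+$ consisting of those $g$ with $\varphi_{S^1}(g)$ orientation-preserving is normal of index at most $2$, it embeds in $\mathbb{Z}_n\times\varphi_F(G)_+$ (this is essentially Corollary 5.2 applied to $G_+$), and a choice of involution $g_-$ mapping to an orientation-reversing rotation realizes $G$ as a subgroup of a semidirect product $(\mathbb{Z}_n\times\varphi_F(G)_+)\rtimes\mathbb{Z}_2$.

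The key observation for Corollary 5.2 is that when both $\varphi_{S^1}$ and $\varphi_F$ are orientation-preserving, every element of $G$ lies in $G_+$, so there is nothing orientation-reversing to account for and the semidirect product degenerates. Concretely, I would argue as follows. Since $\varphi_{S^1}(g)$ is an orientation-preserving diffeomorphism of $S^1$ for every $g$, and a finite-order orientation-preserving diffeomorphism of $S^1$ is conjugate to a rotation, the map $g\mapsto\varphi_{S^1}(g)$ can be taken (after a smooth conjugacy of the $S^1$ factor, which does not affect the product structure) to land in the rotation group $SO(2)$. A finite subgroup of $SO(2)$ is cyclic, say of order $n$, giving a homomorphism $\rho_1:G\to\mathbb{Z}_n$. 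Likewise $\varphi_F:G\to Diff_+(F)$ is a homomorphism, and since $\varphi_F(G)=\varphi_F(G)_+$ here, we get $\rho_2:G\to\varphi_F(G)$. Then $g\mapsto(\rho_1(g),\rho_2(g))$ is a homomorphism $G\to\mathbb{Z}_n\times\varphi_F(G)$, and its kernel consists of $g$ acting trivially on both factors, hence acting trivially on $S^1\times F$; since $\varphi$ is an action (injective on the relevant quotient, or at worst we are describing the image), this kernel is trivial, so $G$ embeds in $\mathbb{Z}_n\times\varphi_F(G)$.

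I would present this either as a direct short proof along the lines above, or — more economically — by simply noting that the hypothesis of Corollary 5.2 forces $G_+=G$, so the element $g_-$ in Proposition 5.1 need not exist, the $\mathbb{Z}_2$ factor drops out, and Proposition 5.1's conclusion collapses to $G\hookrightarrow\mathbb{Z}_n\times\varphi_F(G)_+=\mathbb{Z}_n\times\varphi_F(G)$. The main (and really only) subtlety to get right is the bookkeeping of exactly which proposition gives the embedding of $G_+$: Proposition 5.1 as stated hands us the full semidirect-product statement, and one should make sure that invoking it with the observation $G=G_+$ legitimately yields the cleaner conclusion rather than circularly citing Corollary 5.2 itself. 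Given that Proposition 5.1 is quoted as a black box from \cite{peet2018}, the safest route is the two-line deduction: the hypotheses of Proposition 5.1 are vacuously met with the orientation-reversing data empty, so its conclusion specializes immediately.
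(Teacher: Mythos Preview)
Your direct argument is correct and is precisely what the paper intends: the proof of Proposition~5.1 in \cite{peet2018} establishes, as an intermediate step, the embedding of the orientation-preserving subgroup $G_{+}$ into $\mathbb{Z}_{n}\times\varphi_{F}(G)_{+}$ via the pair $(\rho_{1},\rho_{2})$ you describe, and when $G=G_{+}$ that step already gives the conclusion. The paper's one-line proof says exactly this: ``follows directly from the \emph{proof} of Proposition~5.1.''

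One small correction to your closing remark: the hypotheses of Proposition~5.1 are \emph{not} vacuously met here, since that proposition explicitly posits the existence of an orientation-reversing involution $g_{-}$, which by assumption does not exist in the present setting. This is exactly why the paper is careful to cite the \emph{proof} of Proposition~5.1 rather than the proposition itself---the embedding of $G_{+}$ is an ingredient inside that proof, not a formal consequence of its statement. Your direct argument avoids this issue entirely and is the cleaner presentation; just drop the ``vacuously met'' phrasing.
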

\begin{proof}
This follows directly from the proof of Proposition 5.1 in \cite{peet2018}.
\end{proof}
\begin{cor}
\label{cor:associativity2}
Suppose that $\varphi:G\rightarrow Diff(M)$ is a finite group action on an orientable Seifert manifold with a non-orientable base space. Then provided that the unique lifted group action $\tilde{\varphi}:G\rightarrow Diff(\tilde{M})$ satisfies the obstruction condition, $G$ is isomorphic to a subgroup of $\mathbb{Z}_{2}\times H$ where $H$ is a finite group that acts orientation-preservingly on the orientable base space of $\tilde{M}$.
\end{cor}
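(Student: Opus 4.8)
The plan is to chain together the results already established in the paper. Starting from a finite, fiber- and orientation-preserving action $\varphi:G\rightarrow Diff(M)$ on the orientable Seifert manifold $M$ with non-orientable base space, I would first invoke Theorem \ref{thm:associativity} (together with Corollary 3.4) to obtain the unique lifted action $\tilde{\varphi}:G\rightarrow Diff_{+}^{fop}(\tilde{M})$ with image in $Cent_{+}^{fop}(\tau)$, where $\tau$ is the covering translation of the orientable base space double cover. Since we are assuming $\tilde{\varphi}$ satisfies the obstruction condition, Theorem 4.1 lets us realize $\tilde{\varphi}$ as an extended product action: there is a decomposition $\tilde{M}=\widehat{\tilde{M}}\cup\{V_1,\ldots,V_m\}$ with a fibering product structure $k:S^1\times F\rightarrow\widehat{\tilde{M}}$ and a restricted product action $\widehat{\tilde{\varphi}}(g)=(\widehat{\tilde{\varphi}}_1(g),\widehat{\tilde{\varphi}}_2(g))$ on $S^1\times F$.

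Next I would feed this into Theorem 4.2: because $\tilde{\varphi}(G)$ commutes with $\tau$, that theorem forces $\widehat{\tilde{\varphi}}_1(g)(u)=\epsilon(g)u$ with $\epsilon(g)=\pm 1$ for every $g\in G$. In particular the circle factor $\widehat{\tilde{\varphi}}_1:G\rightarrow Diff(S^1)$ lands in the two-element subgroup $\{u\mapsto u,\ u\mapsto u^{-1}\}$; note $\widehat{\tilde{\varphi}}_1$ is fiber-orientation-preserving only on the subgroup where $\epsilon(g)=1$. Then I would apply Proposition 5.1 (or, in the case where every $\widehat{\tilde{\varphi}}_1(g)$ is orientation-preserving, Corollary 5.2) to the product action $\widehat{\tilde{\varphi}}$ on $S^1\times F$. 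Since $\widehat{\tilde{\varphi}}_1(g)$ takes only the values $\pm 1$ on the circle, the cyclic factor $\mathbb{Z}_n$ appearing in Proposition 5.1 is actually $\mathbb{Z}_2$ (or trivial); with $g_-$ chosen as an element with $\epsilon(g_-)=-1$, which squares to the identity on the circle factor, the semidirect product of $\mathbb{Z}_2\times\widehat{\tilde{\varphi}}_2(G)_+$ with $\mathbb{Z}_2$ collapses — the two $\mathbb{Z}_2$'s can be merged — to give that $G$ embeds in $\mathbb{Z}_2\times H$ where $H=\widehat{\tilde{\varphi}}_2(G)_+$ is the orientation-preserving part of the induced action on $F$, hence on the orientable base space of $\tilde{M}$.

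The step I expect to be the main obstacle is the last one: reconciling the $\mathbb{Z}_n$-versus-$\mathbb{Z}_2$ discrepancy and the semidirect-product-versus-direct-product discrepancy between the conclusion of Proposition 5.1 and the desired $\mathbb{Z}_2\times H$. Concretely, I need to argue that when $\widehat{\tilde{\varphi}}_1(G)\subseteq\{\pm 1\}$, the element $g_-$ with $\epsilon(g_-)=-1$ can be taken central (or that the extension splits compatibly), so the group built in Proposition 5.1 is not merely a subgroup of a semidirect product but actually of $\mathbb{Z}_2\times(\text{something})$, and then to identify that "something" with a genuine orientation-preserving action $H$ on the orientable base space of $\tilde{M}$ rather than merely on the cut-open surface $F$. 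This requires checking that the $\mathbb{Z}_2$ coming from the circle acts trivially on $\widehat{\tilde{\varphi}}_2(G)_+$ — which should follow since the circle and surface factors act independently in a product action — and that passing from the action on $F$ to the action on the orbifold $\tilde{B}$ does not introduce orientation-reversing elements, using that $\widehat{\tilde{\varphi}}_2$ is orientation-preserving exactly when $\widehat{\tilde{\varphi}}_1$ is. Finally I would remark that $\tilde{\varphi}$ and $\varphi$ have the same acting group $G$, so the bound on $G$ obtained for $\tilde{M}$ transfers verbatim to the original action on $M$, completing the proof.
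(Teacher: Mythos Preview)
Your overall chain (Theorem \ref{thm:associativity} $\to$ Theorem 4.1 $\to$ Theorem 4.2 $\to$ Section 5) is exactly the paper's route, but you have misread the conclusion of Theorem 4.2, and this misreading is what manufactures the ``obstacle'' you flag at the end. The formula $\widehat{\tilde{\varphi}}_{1}(g)(u)=\epsilon(g)u$ with $\epsilon(g)\in\{+1,-1\}$ means \emph{multiplication} by $\pm 1$ on $S^{1}$ (a rotation by $0$ or $\pi$), not the map $u\mapsto u^{\epsilon(g)}$. Both of these rotations are orientation-preserving on the circle; indeed the lifted action lies in $Cent_{+}^{fop}(\tau)\subset Diff_{+}^{fop}(\tilde{M})$, so by definition every $\widehat{\tilde{\varphi}}_{1}(g)$ preserves the fiber orientation. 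In the notation of the extended-product construction this says $\alpha(g)\equiv 1$ and $\theta_{1}(g)\in\{1,-1\}$. There is no orientation-reversing element on the circle factor, hence no $g_{-}$ and no semidirect product to tame.

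Consequently Proposition 5.1 is never needed: you are always in the hypothesis of Corollary 5.2 (both factors orientation-preserving), which gives $G$ as a subgroup of $\mathbb{Z}_{n}\times\widehat{\tilde{\varphi}}_{2}(G)$ directly. Theorem 4.2 then pins down $n$: since $\widehat{\tilde{\varphi}}_{1}(G)\subset\{1,-1\}$ we have $n\le 2$, so $G$ embeds in $\mathbb{Z}_{2}\times H$ with $H=\widehat{\tilde{\varphi}}_{2}(G)$. Because $\widehat{\tilde{\varphi}}_{1}$ is orientation-preserving and the total action is orientation-preserving, $\widehat{\tilde{\varphi}}_{2}$ is orientation-preserving on $F$ and hence on the orientable base of $\tilde{M}$. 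This is precisely the paper's one-line proof (``follows from Corollary 5.2 and Theorem 4.2''), and your anticipated difficulty with collapsing a semidirect product simply does not arise.
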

\begin{proof}
This follows from Corollary 5.2 and Theorem 4.2.
\end{proof}
\section{Summary}

To summarize this paper, we have presented a unique orientable base space double cover that allows us to extend our previous results. In particular we used this to show an isomorphism between $Cent_{+}^{fop}(\tau)$ to $Diff_{+}^{fp}(M)$. This in turn allowed us to consider the finite, fiber- and orientation-preserving actions on $M$ by considering the finite, fiber-orientation-preserving actions on $\tilde{M}$ that commute with the covering translation. Our final corollary then showed that the groups must be isomorphic to a subgroup of $\mathbb{Z}_{2}\times H$ where $H$ is a finite group that acts orientation-preservingly on the orientable base space of $\tilde{M}$.

\bibliographystyle{unsrt}
\bibliography{references}
 
\vspace{2cm}
    \hfill \textbf{Benjamin Peet} 
    
    \hfill Department of Mathematics
    
    \hfill St. Martin's University
    
    \hfill Lacey, WA 98503 
    
    \hfill \texttt{bpeet@stmartin.edu}

\end{document}